\newtheorem{The}{Theorem}[section]
\newtheorem{Def}{Definition}[section]
\newtheorem{Ex}{Example}[section]
\newtheorem{remark}{Remark}[section]
\newtheorem{coro}{Corollary}
\newtheorem{Conj}{Conjecture}
\newcommand{\cotinv}{\cot^{-1}}
\begin{document}

\begin{center}
{\LARGE {\bf Nonexistence of invariant manifolds in fractional }}\\[0.2cm]
{\LARGE {\bf order dynamical systems }}
\vskip 1cm
{\Large  Sachin Bhalekar, Madhuri Patil}\\
\textit{Department of Mathematics, Shivaji University, Kolhapur - 416004, India, Email:sachin.math@yahoo.co.in, sbb\_maths@unishivaji.ac.in (Sachin Bhalekar), madhuripatil4246@gmail.com (Madhuri Patil)}\\
\end{center}
\begin{abstract}
 Invariant manifolds are important sets arising in the stability theory of dynamical systems. In this article, we take a brief review of invariant sets. We provide some results regarding the existence of invariant lines and parabolas in planar polynomial systems. We provide the conditions for the invariance of linear subspaces in fractional order systems. Further, we provide an important result showing the nonexistence of invariant manifolds (other than linear subspaces) in fractional order systems.\\
 {\bf Keywords}: Invariant manifold, Separatrix, Stability, tangency condition, Caputo fractional derivative.
\end{abstract}
\vskip 1cm
\noindent
\section{Introduction}
Dynamical systems \cite{Meiss,Hale,Hirsch,Perko} is a trending branch of Mathematics playing a vital role in the Mathematical Analysis as well as in the Applied Sciences \cite{Banerjee,Walker,Jackson,Strogatz}. Chaos theory and Fractals \cite{Alligood,Wiggins,Pietronero,Peitgen,Barnsley,Devaney} are the sub-branches of this theory which have attracted the attention of scientists as well as layman. The applications of dynamical systems are found in Arts \cite{Pachepsky,Crilly} and Social Sciences \cite{Kuang,Gori} also. The theoretical results such as Hartman-Grobman theorem \cite{Perko}, Stable manifold theorem \cite{Perko} and Poincare-Bendixson theorem \cite{Perko} made the substantial contributions to the Mathematical Analysis. 
\par Fractional calculus deals with the differentiation and integration of arbitrary order \cite{Love,Oldham,Samko,Samko-1995,Podlubny,Bagley,Diethelm,Das}. The fractional derivative operators are non-local and hence very useful in modeling the memory in the natural systems \cite{Hilfer,Magin,Sabatier,Baleanu,Mainardi,Petras,Monje}. Existence and uniqueness of solution of fractional order initial value problems are discussed in \cite{Delbosco, Gejji1, Wei, Diethelm}. Stability results of fractional order dynamical systems are proposed in \cite{Matignon,Deng,Tavazoei,Bhalekar}. Various efficient numerical methods such as fractional Adams method (FAM) \cite{DFF}, new predictor-corrector method (NPCM) \cite{V. Gejji 3}, new finite-difference predictor-corrector method \cite{Jhinga} and predictor-corrector methods \cite{Kumar} are proposed to handle the tedious computations involved in the nonlinear fractional differential equations (FDE). Attempts are made to construct invariant manifolds \cite{Cong,Sayevand,Deshpande,Deshpande1,Cong1,Ma,Wang} for fractional order systems. 
\par It should be noted that, one cannot expect the same behavior from fractional order systems (FOS) as their classical (integer-order) counterparts.\\
e.g. The solution trajectories of classical differential dynamical systems are smooth whereas those of FOS can have self-intersections \cite{Bhalekar1,Deshpande2}. Some other differences are given in \cite{Kaslik Shiv,Bhalekar2,S. Bhalekar 3}. It is very natural to expect the nonexistence of invariant manifolds from FOSs. 
\par The paper is organized as below:\\
Basic definitions and results are listed in Section \ref{Sec2}. In Section \ref{Sec3}, we propose the conditions for the existence of invariant straight lines, parabolas, some other polynomial curves and exponential curves under the flow of planar quadratic system of ODE. Section \ref{Sec4} provides the answer to the question: 
 Is there exists invariant manifolds for fractional order quadratic systems? In Section \ref{Sec5}, some comments are made on the nonexistence of invariant manifolds in FOSs. Conclusions are summarized in Section \ref{Sec6}.

\section{Preliminaries}\label{Sec2}
This section contains some basic definitions and results given in the literature.  
\begin{Def}\cite{Perko}
Let $E$ be an open subset of $\mathbb{R}^n$ and let $f\in C^1(E)$. For $X_0\in E$, let $\Phi (t,\,X_0)$ be the solution of the initial value problem
\begin{equation}
\dot{X}=f(X),  \label{4.2.1}
\end{equation}
$$X(0)=X_0 $$
defined on its maximal interval of existence $I(X_0)$. Then for $t\in I(X_0)$, the set of mappings $\Phi_t: E\rightarrow\mathbb{R}^n$ defined by
\begin{equation}
\Phi_t(X_0)=\Phi (t,\,X_0) \label{4.2.2}
\end{equation}
is called the flow of the differential equation (\ref{4.2.1}).
\end{Def}
\par Note that $\Phi_0=I$, the identity map.\\
For any $t$, $s$ $\in I(X_0)$,
\begin{equation}
 \Phi_t\circ\Phi_s=\Phi_{t+s}. \label{4.2.3}
\end{equation}
This is called semi-group property of the flow.

\begin{Def}
The set $S\subseteq\mathbb{R}^n$ is said to be invariant under the flow $\Phi_t:\mathbb{R}^n\rightarrow\mathbb{R}^n$ of system (\ref{4.2.1})
if\,\, $\Phi_t(S)\subseteq S$, $\forall$ $t\in I(X_0)$.
\end{Def}

\begin{Def}
A steady state solution of 
(\ref{4.2.1})
is called an equilibrium point. Thus, $X_*$ is an equilibrium point of (\ref{4.2.1}) if $f(X_*)={\bm 0}$.
\end{Def}
\par For the classification of equilibrium points, the readers are referred to  \cite{Perko,Meiss}.

\begin{Def}\cite{Perko} \label{Def 2.4}
Let $E$ be an open subset of\, $\mathbb{R}^n$ and let $f\in C^1(E)$.
The global stable manifold of the system (\ref{4.2.1}) corresponding to an equilibrium $X_*$ is defined as
\begin{equation}
S=\{{\bm c}\in\mathbb{R}^n : \lim\limits_{t\rightarrow\infty}\Phi_t({\bm c})=X_*, \,\, {\bm c}\in E\}.
\end{equation}
Note that, $S$ is invariant under $\Phi_t$.
\end{Def}

\begin{Def}\cite{Perko}
The Homoclinic orbit is an invariant set which is a  closed loop passing through a saddle equilibrium. Such loop is contained in the intersection of stable and unstable manifolds of a single equilibrium point. i.e. Homoclinic orbit approaches to the single equilibrium point as $t\rightarrow\pm\infty$.
\end{Def}

\begin{Def}\cite{Perko}
The Heteroclinic orbit connects different equilibrium points. It approaches different equilibrium points as $t\rightarrow\pm\infty$.
\end{Def}

{\bf Note:}
\begin{enumerate}
\item Every solution curve of (\ref{4.2.1}) is an invariant set.

\item In particular, if $f(X)=AX$, where $A$ is a square matrix then the eigenvectors of $A$ (straight lines) are invariant sets.

\item If $u\pm iv$ are complex eigenvalues of $A$ and if $W$ is a (complex) eigenvector corresponding to $u\pm iv$, then the linear subspace spanned by $\text{Re}(W)$ and $\text{Im}(W)$ is invariant under $\Phi_t$.

\item If $f$ is non-linear then we can have some other invariant sets. e.g. curve, surface  (manifolds).
\end{enumerate}

\begin{Def} \cite{Meiss}
Separatrix $S$ is an invariant manifold such that the qualitative properties of solutions change at $S$.\\
The (global) stable and unstable manifolds of saddle equilibrium are examples of separatrices. 
\end{Def}

\begin{Def}\cite{Podlubny} \label{Def 3,2.1}
Let $\alpha\ge0$ \,\, ($\alpha\in\mathbb{R}$). Then Riemann-Liouville (\text RL) fractional integral of a function $f\in C[0,b]$, $b>0$ of order `$\alpha$' is defined as,
\begin{equation}
{}_0\mathrm{I}_t^\alpha f(t)=
\frac{1}{\Gamma{(\alpha)}}\int_0^t (t-\tau)^{\alpha-1}f(\tau)\,\mathrm{d}\tau. \label{3.1}
\end{equation}
\end{Def}

\begin{Def}\cite{Podlubny}\label{Def 3,2.2}
The Caputo fractional derivative of order $\alpha>0$, $n-1<\alpha< n$, $n\in \mathbb{N}$ is defined for $f\in C^n[0,b]$,\, $b>0$ as,
\begin{equation}
{}_0^{C}\mathrm{D}_t^\alpha f(t)=
\begin{cases}
\frac{1}{\Gamma{(n-\alpha)}}\int_0^t (t-\tau)^{n-\alpha-1}f^{(n)}(\tau)\,\mathrm{d}\tau & \mathrm{if}\,\, n-1<\alpha< n\\
\frac{d^n}{dt^n}f(t) & \mathrm{if}\,\, \alpha=n.
\end{cases}\label{3.2}
\end{equation}
Note that ${}_0^{C}\mathrm{D}_t^\alpha c=0$, where $c$ is a constant.\\
 Thus, equilibrium points of    the fractional order systems ${}_0^C\mathrm{D}_t^\alpha X=f(X)$ are same as their classical counterparts (\ref{4.2.1}).
\end{Def}

\begin{The}
\cite{Luchko} The Solution of non-homogeneous fractional order differential equation
\begin{equation}
{}_0^C\mathrm{D}_t^\alpha x(t)+\lambda x(t)=g(t), \qquad 0<\alpha<1, \label{1.12}
\end{equation} 
is given by,
\begin{equation}
x(t)=\int_0^t \tau^{\alpha-1}E_{\alpha,\alpha}(-\lambda \tau^\alpha)g(t-\tau)\,\mathrm{d}\tau+x(0)E_\alpha(-\lambda t^\alpha), \label{1.13}
\end{equation}
where $E_\alpha(z)=\sum_{k=0}^\infty \frac{z^k}{\Gamma(\alpha k+1)}$\, \quad  and \\ \quad $E_{\alpha,\beta}(z)=\sum_{k=0}^\infty \frac{z^k}{\Gamma(\alpha k+\beta)}\, ,\quad z\in\mathbb{C}, \,\,(\alpha>0,\,\beta>0)$ \,\,\\ are Mittag-Leffler functions \cite{Podlubny}.
\end{The}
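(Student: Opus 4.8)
The plan is to solve the initial value problem (\ref{1.12}) by the Laplace transform method, which turns the fractional differential equation into a purely algebraic one. First I would apply the Laplace transform $\mathcal{L}\{\cdot\}(s)$ to both sides of (\ref{1.12}), using the standard rule for the Caputo derivative in the range $0<\alpha<1$, namely
$$\mathcal{L}\{{}_0^C\mathrm{D}_t^\alpha x(t)\}(s)=s^\alpha X(s)-s^{\alpha-1}x(0),$$
where $X(s)=\mathcal{L}\{x(t)\}(s)$. Writing $G(s)=\mathcal{L}\{g(t)\}(s)$, this gives $s^\alpha X(s)-s^{\alpha-1}x(0)+\lambda X(s)=G(s)$, which I would solve for $X(s)$ to obtain
$$X(s)=\frac{s^{\alpha-1}}{s^\alpha+\lambda}\,x(0)+\frac{1}{s^\alpha+\lambda}\,G(s).$$

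The second step is to recognize each factor on the right as the transform of a Mittag-Leffler expression. I would invoke the two standard pairs
$$\mathcal{L}\{E_\alpha(-\lambda t^\alpha)\}(s)=\frac{s^{\alpha-1}}{s^\alpha+\lambda},\qquad \mathcal{L}\{t^{\alpha-1}E_{\alpha,\alpha}(-\lambda t^\alpha)\}(s)=\frac{1}{s^\alpha+\lambda},$$
both valid for $\mathrm{Re}(s)$ sufficiently large. The first term of $X(s)$ then inverts directly to $x(0)E_\alpha(-\lambda t^\alpha)$. Since the second term is a product of two transforms, I would apply the convolution theorem to recover $\int_0^t \tau^{\alpha-1}E_{\alpha,\alpha}(-\lambda\tau^\alpha)g(t-\tau)\,\mathrm{d}\tau$. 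Summing the two contributions reproduces formula (\ref{1.13}) exactly.

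The main obstacle is not the formal manipulation but the justification of the two Mittag-Leffler transform pairs. These follow from term-by-term Laplace transformation of the defining power series, together with the elementary identity $\mathcal{L}\{t^{\alpha k+\beta-1}\}(s)=\Gamma(\alpha k+\beta)/s^{\alpha k+\beta}$, after which the $\Gamma$-factors cancel against those in the series and one resums a geometric-type series in $\lambda/s^\alpha$; this resummation requires controlling convergence in a suitable right half-plane, which is where the analytic care lies. As an independent check that is insensitive to these convergence issues, I would verify (\ref{1.13}) by direct substitution: using that $E_\alpha(-\lambda t^\alpha)$ solves the homogeneous problem ${}_0^C\mathrm{D}_t^\alpha y+\lambda y=0$ with $y(0)=1$, while the convolution term provides a particular solution that vanishes at $t=0$ (because the factor $\tau^{\alpha-1}$ is integrable near $\tau=0$). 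This confirms both that the differential equation holds and that the initial value $x(0)$ is attained, since $E_\alpha(0)=1$.
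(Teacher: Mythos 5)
Your Laplace-transform derivation is correct, but note that the paper itself offers no proof of this theorem: it is quoted as a known result with a citation to Luchko and Gorenflo, whose original derivation proceeds by a Mikusi\'nski-type operational calculus for the Caputo derivative rather than by the Laplace transform. Your route is the standard textbook alternative: the Caputo transform rule $\mathcal{L}\{{}_0^C\mathrm{D}_t^\alpha x\}(s)=s^\alpha X(s)-s^{\alpha-1}x(0)$, the two Mittag-Leffler pairs $\mathcal{L}\{E_\alpha(-\lambda t^\alpha)\}=s^{\alpha-1}/(s^\alpha+\lambda)$ and $\mathcal{L}\{t^{\alpha-1}E_{\alpha,\alpha}(-\lambda t^\alpha)\}=1/(s^\alpha+\lambda)$, and the convolution theorem reproduce (\ref{1.13}) exactly as you say. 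What the two approaches buy is different: your argument is short and familiar, but it tacitly assumes $g$ and the unknown $x$ are Laplace-transformable (of exponential order), and strictly speaking it produces \emph{a} solution, with uniqueness and the justification of termwise transformation needing the half-plane convergence control you flag; the operational-calculus route of the cited source works algebraically in function spaces chosen in advance, so it avoids transform-convergence hypotheses and extends naturally to multi-term fractional equations. One small point in your fallback check by direct substitution: asserting that the convolution term is a particular solution with value $0$ at $t=0$ is not immediate, since computing ${}_0^C\mathrm{D}_t^\alpha$ of $\int_0^t\tau^{\alpha-1}E_{\alpha,\alpha}(-\lambda\tau^\alpha)g(t-\tau)\,\mathrm{d}\tau$ requires handling the integrable singularity $\tau^{\alpha-1}$ and an interchange of differentiation and integration; that step deserves the same care you give the transform pairs, though it does not affect the validity of your primary argument.
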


\begin{remark}
If a manifold is given by the equation
\begin{equation}
y=h(x), \quad x\in\mathbb{R}^n, \quad y\in\mathbb{R}^m \label{4.2.9}
\end{equation}
and the system of differential equations is given by,
\begin{equation}
\begin{split}
\dot{x}&=f(x,\,y)\\
\dot{y}&=g(x,\,y)\\ \label{4.2.16}
\end{split}
\end{equation}
then the condition 
\begin{equation}
\mathrm{D}h(x) \dot{x}=\dot{y} \quad\Rightarrow \mathrm{D}h(x) f(x,\,h(x))=g(x,\,h(x)),
\end{equation}

is necessary and sufficient to show the invariance of (\ref{4.2.9}) under the flow of (\ref{4.2.16}). This condition is known as tangency condition \cite{Wiggins}.
\end{remark}

\section{Some invariant manifolds of planar quadratic systems} \label{Sec3}
In this section, we provide some necessary and sufficient conditions to exist the invariant lines and invariant parabolas for planar polynomial systems with classical derivatives.

\subsection{Literature review}
Consider a planar polynomial vector field
\begin{equation}
\begin{split}
\dot{x}&=P_n(x,y),\\
\dot{y}&=Q_n(x,y),
\end{split} \label{4.12}
\end{equation}
where $P_n(x,y)$ and $Q_n(x,y)$ are polynomials of degree $n$.
\par The second part of Hilbert's sixteenth problem \cite{Hilbert} is related to the number of limit cycles in polynomial system (\ref{4.12}). The literature review of planar quadratic system is taken by Coppel \cite{Coppel}. In \cite{Chicone}, authors studied  the classification of phase portraits of a quadratic system in a region surrounded by separatrix  cycle.
\par In \cite{Yanqian}, Ye proposed the following conjecture:
\begin{Conj}\label{conj1}
When $n$ is odd, the system (\ref{4.12})
has at most $M_n=2n+2$ invariant lines; when $n$ is even, the system (\ref{4.12}) has at most $M_n=2n+1$ invariant straight lines.
\end{Conj}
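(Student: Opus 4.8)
The plan is to translate the geometric notion of an invariant line into Darboux's algebraic language and then bound the number of admissible lines by combining a global divisibility estimate with a local analysis at the points at infinity. Writing the associated polynomial vector field as $\mathcal{X} = P_n\,\partial_x + Q_n\,\partial_y$, a line $\ell(x,y) = ax + by + c$ is invariant under the flow of (\ref{4.12}) precisely when $\mathcal{X}(\ell) = a P_n + b Q_n$ is divisible by $\ell$, i.e. there is a cofactor $K$ with $a P_n + b Q_n = K\,\ell$ and $\deg K \le n-1$. First I would record the crude global bound coming from the first extactic polynomial
\begin{equation}
E_1 = P_n\,\mathcal{X}(Q_n) - Q_n\,\mathcal{X}(P_n), \qquad \deg E_1 \le 3n-1 . \label{extactic}
\end{equation}
Every invariant line is a factor of $E_1$ (assuming $E_1 \not\equiv 0$), and distinct lines give coprime factors, so the number of invariant lines is at most $3n-1$. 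This already yields finiteness and the correct order of magnitude, but it overshoots the conjectured value and carries no information about the real structure.

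To sharpen the estimate I would pass to the projective picture and sort the invariant lines into parallel classes. Parallel lines share a point at infinity, and such a point must be a zero of the homogeneous polynomial $H = y\,\widetilde{P}_n - x\,\widetilde{Q}_n$ of degree $n+1$, where $\widetilde{P}_n,\widetilde{Q}_n$ denote the leading homogeneous parts of $P_n,Q_n$. Hence there are at most $n+1$ admissible affine directions, together with the possible invariance of the line at infinity itself (governed by $H \equiv 0$ versus $H \not\equiv 0$). The second ingredient is a per-direction bound: after rotating a fixed direction to the vertical, $k$ parallel invariant lines $x = c_1,\dots,x = c_k$ force $\prod_{j=1}^{k} (x - c_j)$ to divide $P_n$, so that a block of degree is consumed which is then unavailable to produce lines in other directions. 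The heart of the argument is to balance these two constraints — many directions against many lines per direction — relative to the fixed total degree $n$, summing the per-class contributions to reach $M_n$.

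The hard part, and the reason the statement is only a conjecture, is closing the gap between the soft bound $3n-1$ and the sharp value $2n+2$ (resp. $2n+1$), and in particular explaining the parity dependence on $n$. This requires a careful real bookkeeping of multiplicities at the finite singular points and at infinity, together with an argument that the extremal distributions of lines over the at most $n+1$ algebraically permitted directions cannot all be realized simultaneously over $\mathbb{R}$; the even/odd split strongly suggests that one configuration at infinity is available for odd $n$ but obstructed for even $n$. I would therefore expect the main obstacle to be not the upper estimate in the abstract but the realizability analysis: proving that the combinatorially permitted maxima are actually attained (by constructing explicit extremal systems) while ruling out every one-better configuration. Because this realizability question is delicate and genuinely depends on the ground field being $\mathbb{R}$ rather than $\mathbb{C}$, I would treat the crude extactic bound in (\ref{extactic}) and the parallel-class reduction as the secure part of the proof and flag the sharp, parity-sensitive count as the step most likely to demand either a long case analysis or an altogether different idea.
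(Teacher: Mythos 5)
There is a fundamental problem that no amount of technical work can fix: the statement you are trying to prove is presented in the paper as a \emph{conjecture} (due to Ye \cite{Yanqian}), the paper offers no proof of it, and — decisively — the paper itself records that the conjecture is \emph{false} for $n>4$, citing Art\'es, Gr\"unbaum and Llibre \cite{Artes}; it is known to hold only for $n=2,3,4$, where it was proved by Sokulski \cite{Sokulski}. So the step you honestly flagged as "most likely to demand \dots an altogether different idea" — closing the gap between the soft bound $3n-1$ and the parity-sensitive value $M_n$ — is not merely delicate: it is impossible, since for every $n\ge 5$ there exist degree-$n$ systems with more than $M_n$ invariant straight lines. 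Your two "secure" ingredients are correct and are essentially the standard tools of this literature: the extactic polynomial $E_1=P_n\,\mathcal{X}(Q_n)-Q_n\,\mathcal{X}(P_n)$ of degree at most $3n-1$ bounds the number of invariant lines when $E_1\not\equiv 0$, at most $n$ invariant lines can lie in one parallel class, and the admissible directions correspond to the at most $n+1$ singular points at infinity; the last two facts are precisely statements 2 and 3 of the Art\'es et al.\ theorem quoted in the paper. But these constraints, balanced in any way, cannot yield $2n+2$ (resp.\ $2n+1$), because that number is simply not an upper bound once $n\ge 5$.

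Two secondary gaps are worth noting even for the range $n\le 4$ where the statement is true. First, the conjecture as literally stated needs a finiteness hypothesis: the paper's Example \ref{Ex 3.5} exhibits a quadratic system with infinitely many invariant straight lines, and this is exactly the degenerate case $E_1\equiv 0$ in which your divisibility argument says nothing; any correct treatment must either assume finitely many invariant lines (as the Art\'es theorem does) or dispose of the case $E_1\equiv 0$ separately. Second, the "heart of the argument" in your plan — summing per-class contributions to reach $M_n$ — is left entirely unspecified; naive counting from your two constraints gives at most $n(n+1)$ lines, far above $M_n$, and the proposal contains no mechanism for the even/odd distinction beyond the suggestion that one should exist. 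For $n=2$ the paper's own Theorem \ref{Thm 4.3.1} (tangency condition on $y=mx$, reducing invariance to polynomial identities in $m$) gives a concrete and elementary route to counting, and Sokulski's proofs for $n\le 4$ are the place where a genuine sharp count is carried out; if your goal is a provable statement, you should restrict to those degrees or instead prove the correct general bound $3n-1$, which your extactic argument does deliver.
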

For $n=2, 3$ and $4$, this conjecture is proved by Sokulski \cite{Sokulski}. However, the conjecture is false \cite{Artes} if $n>4$. It should be noted that the system (\ref{4.12}) can have infinitely many invariant straight lines (see Example \ref{Ex 3.5}).
\par Artes \cite{Artes} proposed the following important result:
\begin{The}
Assume that the polynomial differential system (\ref{4.12}) of degree $n$ has finitely many invariant straight lines. Then the following statements hold for system (\ref{4.12}).
\begin{enumerate}
\item Either all the points on an invariant line are equilibrium or the line contains no more than $n$ equilibrium points.
\item No more than $n$ invariant straight lines can be parallel.
\item The set of all invariant straight lines through a single point cannot have more than $n+1$ different slopes.
\item Either it has infinitely many finite equilibrium points, or it has at most $n^2$ finite equilibrium points.
\end{enumerate}
\end{The}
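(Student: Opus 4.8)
The common tool I would use is the Darboux cofactor characterization of invariant lines: a line $\ell = ax+by+c=0$, $(a,b)\neq(0,0)$, is invariant under (\ref{4.12}) if and only if there is a polynomial cofactor $K$ with $\deg K\le n-1$ satisfying $aP_n+bQ_n = K\,\ell$, equivalently if and only if $\ell\mid(aP_n+bQ_n)$. This is immediate from the tangency condition of the preceding remark, since along $\ell=0$ the field $(P_n,Q_n)$ must be parallel to the line. Granting this, statement~1 is quick: parametrizing an invariant line by $\gamma(t)=(x_0+\alpha t,\ y_0+\beta t)$, tangency gives $(P_n(\gamma(t)),Q_n(\gamma(t)))=g(t)\,(\alpha,\beta)$ for a single scalar polynomial $g$ of degree $\le n$, and the equilibria on the line are exactly the zeros of $g$. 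Hence either $g\equiv0$ and every point of the line is an equilibrium, or $g$ has at most $n$ roots.

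For statement~2, parallel invariant lines share $(a,b)$ and differ only in their constant terms, so they are pairwise coprime linear forms, each dividing the single polynomial $R:=aP_n+bQ_n$ of degree $\le n$; their product then divides $R$, forcing at most $n$ such lines, unless $R\equiv0$, in which case every line $ax+by+c=0$ is invariant, contradicting the standing finiteness hypothesis. Statement~4 is B\'ezout applied to the equilibria, i.e. the common zeros of $P_n$ and $Q_n$: if $\gcd(P_n,Q_n)=1$ the two curves share no component and meet in at most $(\deg P_n)(\deg Q_n)\le n^2$ points over $\mathbb{C}$, hence in at most $n^2$ real points; a nontrivial common factor yields an entire component of equilibria and thus infinitely many.

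The substantive case is statement~3. After translating the common point to the origin, a line through the origin in direction $(\alpha,\beta)$ is invariant if and only if $\beta P_n-\alpha Q_n$ vanishes identically along it, that is, if and only if the polynomial $D(x,y):=yP_n(x,y)-xQ_n(x,y)$, of degree at most $n+1$, vanishes on that line. Thus the invariant lines through the origin are precisely the lines through the origin contained in $\{D=0\}$, and each of their slopes is a root of the leading homogeneous form of $D$; since a nonzero binary form of degree $d\le n+1$ has at most $d$ projective roots, there are at most $n+1$ slopes. The obstacle is the degenerate possibility $D\equiv0$: by unique factorization this forces $P_n=xB$ and $Q_n=yB$, so the field is a scalar multiple of the radial field $x\,\partial_x+y\,\partial_y$ and every line through the origin is invariant, which is exactly the ``infinitely many lines'' situation excluded by hypothesis. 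I expect this reduction to the single binary form $D$, together with the recognition that its identical vanishing coincides with the excluded degenerate case, to be the crux; the remaining steps are routine degree bookkeeping.
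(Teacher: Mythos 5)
The paper does not prove this theorem at all: it is quoted from Art\'es, Gr\"unbaum and Llibre \cite{Artes} as a known result, so there is no in-paper proof to compare against, and your proposal has to be judged on its own merits. Your arguments for statements 1--3 are correct and essentially self-contained: the divisibility characterization ($ax+by+c=0$ invariant iff $(ax+by+c)\mid(aP_n+bQ_n)$), the parametrization of an invariant line giving a scalar polynomial $g$ of degree at most $n$ whose zeros are exactly the equilibria on the line, the pairwise-coprimality argument bounding parallel invariant lines by $\deg(aP_n+bQ_n)\le n$ (with the degenerate case $aP_n+bQ_n\equiv 0$ correctly discharged by the finiteness hypothesis), and the reduction of statement 3 to the roots of the leading homogeneous form of $D=yP_n-xQ_n$ (with $D\equiv 0$ forcing the radial field, again excluded by the hypothesis) are all sound.

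Statement 4, however, has a genuine gap. Over $\mathbb{R}$ a nontrivial common factor of $P_n$ and $Q_n$ does \emph{not} produce infinitely many equilibria, because the real zero set of that factor may be finite or empty: take $P_n=(x^2+y^2+1)\,p$, $Q_n=(x^2+y^2+1)\,q$ with $p,q$ coprime (the factor contributes no real equilibria at all), or use $x^2+y^2$ instead (it contributes exactly one). So the second branch of your dichotomy, ``$\gcd(P_n,Q_n)\neq 1$ implies infinitely many equilibria,'' is false as stated, and it is precisely in that branch that the bound $n^2$ still has to be earned (the theorem's conclusion does remain true there). A repair along your lines: write $P_n=gp$, $Q_n=gq$ with $g=\gcd(P_n,Q_n)$ of degree $d$ and $\gcd(p,q)=1$, and assume $g$ squarefree (pass to the radical, which has the same real zeros). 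If some real zero of $g$ is a nonsingular point of the complex curve $\{g=0\}$, the implicit function theorem yields an arc of equilibria, hence infinitely many. Otherwise every real zero of $g$ is a singular point of $\{g=0\}$, and a degree-$d$ curve with no multiple component has at most $\binom{d}{2}$ singular points; combined with B\'ezout for $p,q$ this gives at most $\binom{d}{2}+(n-d)^2\le n^2$ equilibria, the inequality holding because $d\le n$ forces $3d-1\le 4n$. Without an argument of this kind, the real case of statement 4 is not established.
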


\subsection{Necessary and sufficient conditions for the existence of invariant straight lines}
In this section, we propose some necessary and sufficient conditions for the existence of invariant straight lines for the system (\ref{4.12}).
\begin{The}\label{Thm 4.3.1}
Consider planar polynomial system of degree $n$,
\begin{equation}
\begin{split}
\dot{x}&=\sum_{i,j=0}^{n}a_{i,j}x^iy^j\\
\dot{y}&=\sum_{i,j=0}^{n}b_{i,j}x^iy^j \label{4.3.2} 
\end{split}
\end{equation}
with $a_{0,0}=b_{0,0}=0$
\begin{enumerate}
\item There exists infinitely many invariant straight lines  $y=mx$ to the system (\ref{4.3.2})
if 
\begin{equation}
\begin{split}
b_{k,0}=a_{0,k}&=0 \quad \text{and}\\
b_{k-j,j}-a_{k-(j-1),j-1}&=0, \,\, 1\le j\le k \label{4.3.3}
\end{split}
\end{equation}
for all $k=1,2,\dots,n$.
\item Consider any set of distinct values $i_1,i_2,\dots,i_l$ from \{1,2,\dots,n\}, where $1\le l \le n$ and 
\begin{equation}
b_{k,0}=a_{0,k}=
b_{k-j,j}-a_{k-(j-1),j-1}=0,  \label{4.3.5}
\end{equation}
where, $k\in\{1,2,\dots,n\}-\{i_1,i_2,\dots,i_l\}$ and $1\le j \le k$. The values of $m$ obtained from the system of $l$ equations,
\begin{equation}
b_{i_p,0}+\sum_{j=1}^{i_p}\left(b_{i_p-j,j}-a_{i_p-(j-1),j-1}\right)m^j-a_{0,i_p}m^{i_p+1}=0,\quad 1\le p\le l
\end{equation}
will give the invariant lines $y=mx$.
\end{enumerate}

\end{The}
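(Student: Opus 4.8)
The plan is to reduce the invariance question to a family of polynomial identities in the slope $m$ by applying the tangency condition from the Remark above. For the line $y=h(x)=mx$ we have $\mathrm{D}h(x)=m$, so invariance under the flow of (\ref{4.3.2}) is equivalent to the single scalar identity $m\,f(x,mx)=g(x,mx)$ holding for all $x$, where $f$ and $g$ denote the two right-hand sides of (\ref{4.3.2}). First I would substitute $y=mx$ and use $x^iy^j=m^j x^{i+j}$ to turn both sides into polynomials in $x$. Collecting the terms of total degree $k=i+j$, the coefficient of $x^k$ on the left (from $m\,f$) is $\sum_{j=0}^{k}a_{k-j,j}m^{j+1}$ and on the right (from $g$) is $\sum_{j=0}^{k}b_{k-j,j}m^{j}$. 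Equating these for every $k$ and transposing yields, for each $k$, the polynomial condition
\begin{equation}
P_k(m):=b_{k,0}+\sum_{j=1}^{k}\bigl(b_{k-j,j}-a_{k-(j-1),j-1}\bigr)m^{j}-a_{0,k}\,m^{k+1}=0. \label{plan-Pk}
\end{equation}
Because $a_{0,0}=b_{0,0}=0$, the degree-zero identity is automatic, so $y=mx$ is invariant if and only if $P_k(m)=0$ for all $k=1,\dots,n$.

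With this reduction in hand, Part 1 is immediate. If the hypotheses (\ref{4.3.3}) hold, then for every $k$ the constant term $b_{k,0}$, each intermediate coefficient $b_{k-j,j}-a_{k-(j-1),j-1}$, and the leading coefficient $a_{0,k}$ all vanish, so $P_k$ is the zero polynomial and $P_k(m)=0$ for every real $m$. Hence every line $y=mx$ through the origin (which is an equilibrium since $a_{0,0}=b_{0,0}=0$) is invariant, giving infinitely many such lines. Conversely, if infinitely many slopes $m$ satisfy the system simultaneously, then each $P_k$ has infinitely many roots and must vanish identically, returning exactly (\ref{4.3.3}); this supplies the necessity promised by the section heading.

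For Part 2 I would partition the degrees $\{1,\dots,n\}$ into the chosen indices $\{i_1,\dots,i_l\}$ and their complement. On the complement one imposes $P_k\equiv 0$, which is precisely (\ref{4.3.5}) and annihilates those equations identically in $m$; on each chosen index $i_p$ one retains the genuine polynomial equation $P_{i_p}(m)=0$, which written out is exactly the displayed system of the theorem. The admissible slopes are then the common solutions of these $l$ equations, each a polynomial in $m$ of degree at most $i_p+1$, so only finitely many lines $y=mx$ are produced, in contrast to Part 1.

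The step that requires the most care is the coefficient bookkeeping in (\ref{plan-Pk}), specifically the two unpaired ``edge'' terms. Multiplying $f$ by $m$ shifts the $a$-indices by one in the exponent of $m$, so the $a$-contribution to $x^k$ ranges over $m^1,\dots,m^{k+1}$ while the $b$-contribution ranges over $m^0,\dots,m^{k}$; the unmatched ends are the constant $b_{k,0}$ and the top term $a_{0,k}m^{k+1}$, and every interior power $m^j$ pairs $b_{k-j,j}$ against $a_{k-(j-1),j-1}$. One must also fix the degree convention---reading (\ref{4.3.2}) as a system of total degree $n$ (equivalently $a_{i,j}=b_{i,j}=0$ whenever $i+j>n$)---so that $k$ runs only up to $n$ and no degrees beyond $n$ contribute extra constraints.
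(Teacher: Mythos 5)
Your proposal is correct and follows essentially the same route as the paper: applying the tangency condition $\dot{y}=m\dot{x}$ along $y=mx$, collecting powers of $x$ by total degree $k$ to obtain the polynomial conditions $P_k(m)=0$, and then splitting into the case where all $P_k$ vanish identically (Part 1) versus the case where the $P_k$ with $k\notin\{i_1,\dots,i_l\}$ vanish identically and the remaining $l$ equations are solved for $m$ (Part 2). Your added converse for Part 1 (infinitely many roots force each $P_k\equiv 0$) and the explicit check of the edge terms $b_{k,0}$ and $a_{0,k}m^{k+1}$ are fine refinements but do not change the argument.
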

\begin{proof}
Consider the equation of line
\begin{equation}
y=mx. \label{4.3.6}
\end{equation}
Differentiating (\ref{4.3.6}), we get
$
 \dot{y}=m\dot{x}
$.\\
Therefore, the tangency condition implies that,
\begin{equation*}
\sum_{i,j=0}^{n}(b_{i,j}-ma_{i,j})m^jx^{i+j}=0\quad \forall x\in \mathbb{R}.
\end{equation*} 
This holds if and only if,
\begin{equation*}
\sum_{j=0}^{k}(b_{k-j,j}-ma_{k-j,j})m^j=0
\end{equation*}
for each $k=1,2,\dots,n$.
\begin{equation}
\Leftrightarrow b_{k,0}+\sum_{j=1}^{k}(b_{k-j,j}-a_{k-(j-1),j-1})m^j-a_{0,k}m^{k+1}=0 \label{4.3.7}
\end{equation}
for each $k=1,2,\dots,n$.\\
{\bf Case 1:} If $b_{k,0}=a_{0,k}=0$ and $b_{k-j,j}-a_{k-(j-1),j-1}=0$, $1\le j\le k$ for each $k=1,2,\dots,n$, then the tangency condition (\ref{4.3.7}) is satisfied by any $m\in \mathbb{R}$.\\
This proves the Statement 1.\\
{\bf Case 2:} Now, instead of equating all the coefficients of all the powers of $m$ in (\ref{4.3.7}) to zero, we solve some of the equations (\ref{4.3.7}) for $m$ and proceed as in Case 1  for other equations. For $1\le l \le n$, if we solve any $l$ equations (\ref{4.3.7}) for $m$ and equate coefficients of powers of $m$ to zero in the remaining equations, then we obtain the Statement 2.\\
Note that, the Statement 2 provides 
$$\binom{n}{1}+\binom{n}{2}+\cdots+\binom{n}{n}=2^n-1 $$
ways to find invariant straight lines for the system (\ref{4.3.2}).
\end{proof}

\begin{coro}
Consider the planar polynomial system (\ref{4.3.2}) of degree $n$. Then the lines $x=k$ (respectively, $y=l$) are invariant under the flow of system (\ref{4.3.2}) if and only if 
\begin{equation}
\sum_{i,j=0}^{n}a_{i,j}k^iy^j=0\quad \forall y\in\mathbb{R} \,\,\,(\text{respectively}, \sum_{i,j=0}^{n}b_{i,j}x^il^j=0\quad \forall x\in\mathbb{R}). \label{4.3.8}
\end{equation}
where $k$ and $l$ are real constants.
\end{coro}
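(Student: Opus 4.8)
The plan is to treat the vertical line $x=k$ as a degenerate graph and apply the tangency condition of the Remark with the roles of $x$ and $y$ interchanged. Writing the line as $x=h(y)$ with the constant function $h(y)\equiv k$, the swapped tangency condition reads $\mathrm{D}h(y)\,\dot y=\dot x$ along the curve, i.e. $h'(y)\,g(k,y)=f(k,y)$, where $f$ and $g$ denote the right-hand sides of (\ref{4.3.2}). Since $h'(y)\equiv 0$, this collapses to the single requirement $f(k,y)=0$ along the line, which is exactly $\sum_{i,j=0}^n a_{i,j}k^i y^j=0$. Because the tangency condition is both necessary and sufficient, this immediately yields the stated equivalence for $x=k$; the case $y=l$ follows by the identical argument applied to the horizontal graph $y=h(x)\equiv l$, forcing $g(x,l)=0$.

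For readers who prefer not to invoke the reversed tangency form, I would give the direct argument. For sufficiency, suppose $\sum_{i,j}a_{i,j}k^i y^j=0$ for all $y$. Fix $(k,y_0)$ and let $\tilde y$ solve the scalar equation $\dot{\tilde y}=\sum_{i,j}b_{i,j}k^i\tilde y^{\,j}$ with $\tilde y(0)=y_0$; then $(k,\tilde y(t))$ satisfies (\ref{4.3.2}), since its $x$-component has derivative $0=\sum_{i,j}a_{i,j}k^i\tilde y^{\,j}$. By uniqueness of solutions (the vector field is polynomial, hence $C^1$), this is the trajectory through $(k,y_0)$, so it never leaves the line and $x=k$ is invariant. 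For necessity, suppose the line is invariant. For an arbitrary $y_0\in\mathbb{R}$ the solution through $(k,y_0)$ has $x(t)\equiv k$, hence $\dot x(0)=0$; evaluating $\dot x=\sum_{i,j}a_{i,j}x^i y^j$ at $t=0$ gives $\sum_{i,j}a_{i,j}k^i y_0^{\,j}=0$. As $y_0$ ranges over all of $\mathbb{R}$, the polynomial identity $\sum_{i,j}a_{i,j}k^i y^j=0$ holds for all $y$.

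The only genuinely delicate point is the quantifier ``$\forall y$''. It is tempting to read invariance as a statement about one trajectory, which would only control the $y$-values actually visited along that orbit; the correct reading uses that the line $x=k$ is the whole set of points $(k,y_0)$, $y_0\in\mathbb{R}$, so invariance must be tested at every such initial condition, and this is what upgrades the pointwise vanishing of $\dot x$ to the polynomial identity in $y$. I expect this to be the main step to state carefully, together with the observation that vertical lines are not graphs over $x$ and therefore require the $x$–$y$ symmetric version of the tangency condition rather than the literal form in the Remark.
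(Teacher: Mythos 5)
Your proposal is correct, and its first paragraph is essentially the paper's own (implicit) argument: the corollary is stated without a separate proof precisely because it follows from the tangency-condition method of Theorem \ref{Thm 4.3.1}, where for the line $x=k$ the condition collapses to $\dot x=0$ along the line, i.e. $\sum_{i,j=0}^{n}a_{i,j}k^iy^j=0$ for all $y$, and symmetrically for $y=l$. Your supplementary direct argument --- sufficiency via the scalar equation $\dot{\tilde y}=\sum_{i,j}b_{i,j}k^i\tilde y^{\,j}$ together with uniqueness of solutions, necessity by evaluating $\dot x(0)=0$ at every point $(k,y_0)$ of the line --- is a genuine addition worth keeping: the Remark's tangency condition is literally stated only for manifolds of the form $y=h(x)$, and a vertical line is not such a graph, so the paper's appeal to it for $x=k$ tacitly assumes the $x$--$y$ swapped version that you state explicitly; your direct argument closes that small gap from first principles.
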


 The Theorem \ref{Thm 4.3.1} is illustrated for $n=2$ in the following Theorem.
\begin{The} \label{Thm 4.3.2}
Consider the planar quadratic system,
\begin{equation}
\begin{split}
\dot{x}&=a_1x+a_2y+a_3x^2+a_4y^2+a_5xy\\
\dot{y}&=b_1x+b_2y+b_3x^2+b_4y^2+b_5xy \label{4.3.9} 
\end{split}
\end{equation}

\begin{enumerate}
\item The line 
(\ref{4.3.6})
is invariant under the flow of given system (\ref{4.3.9}), if and only if
\begin{enumerate}

\item $a_2=0$, \,$b_1=0$\, and \, $a_1=b_2$. In this case, the real values of $m$ obtained from the cubic equation,
\begin{equation*}
a_4m^3+(a_5-b_4)m^2+(a_3-b_5)m-b_3=0
\end{equation*}
will give the invariant lines (\ref{4.3.6}).
\begin{center}
OR
\end{center}
\item $a_4=0$,\,$b_3=0$,\, $a_5=b_4$,\, $a_3=b_5$ and $(b_2-a_1)^2+4b_1a_2\ge0$. In this case, the real values of $m$ obtained from the quadratic equation 
\begin{equation*}
b_1+(b_2-a_1)m-a_2m^2=0
\end{equation*}
provide the invariant lines.
\begin{center}
OR
\end{center}
\item the coefficients in the following equations 
\begin{equation}
a_2m^2+(a_1-b_2)m-b_1=0 \label{4.3.21}
\end{equation}
and
\begin{equation}
a_4m^3+(a_5-b_4)m^2+(a_3-b_5)m-b_3=0  \label{4.3.22}
\end{equation}

are not all zero. In this case, the real values of $m$ satisfying  (\ref{4.3.21}) and (\ref{4.3.22}) simultaneously, provide the invariant lines (\ref{4.3.6}).
\begin{center}
OR
\end{center}
\item the coefficients in (\ref{4.3.21}) and (\ref{4.3.22}) are all zero. In this case, there exists infinitely many invariant straight lines (\ref{4.3.6}) for all $m\in\mathbb{R}$.
\end{enumerate}
\item The line \,$x=k$\, (respectively, $y=l$) is invariant under the flow of given system (\ref{4.3.9}), if \,$a_1k+a_2y+a_3k^2+a_4k^2+a_5ky=0
$, \, for all $y\in\mathbb{R}$ \,(respectively, $b_1x+b_2l+b_3x^2+b_4l^2+b_5xl=0$, \, for all $x\in\mathbb{R}$).
\end{enumerate}

\end{The}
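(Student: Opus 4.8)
The plan is to specialize Theorem \ref{Thm 4.3.1} to the case $n=2$ by applying the tangency condition directly to the candidate line (\ref{4.3.6}). First I would substitute $y=mx$ into the right-hand sides of (\ref{4.3.9}) and impose $\dot y = m\dot x$. Since
$$\dot x = (a_1+a_2m)\,x + (a_3+a_5m+a_4m^2)\,x^2, \qquad \dot y = (b_1+b_2m)\,x + (b_3+b_5m+b_4m^2)\,x^2,$$
the requirement $\dot y - m\dot x = 0$ for all $x\in\mathbb{R}$ forces the coefficients of $x$ and of $x^2$ to vanish separately. The coefficient of $x$ yields, after rearrangement, equation (\ref{4.3.21}), and the coefficient of $x^2$ yields equation (\ref{4.3.22}). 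These are precisely the $k=1$ and $k=2$ instances of the general condition (\ref{4.3.7}) under the dictionary $a_1=a_{1,0}$, $a_2=a_{0,1}$, $a_3=a_{2,0}$, $a_4=a_{0,2}$, $a_5=a_{1,1}$ (and similarly for the $b$'s). Thus the line $y=mx$ is invariant \emph{if and only if} the slope $m$ satisfies both (\ref{4.3.21}) and (\ref{4.3.22}) simultaneously.

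The second step is a case analysis organized by whether each of the two polynomials in $m$ degenerates to the identically-zero polynomial. Equation (\ref{4.3.21}) is satisfied by every $m$ exactly when $a_2=0$, $a_1=b_2$, $b_1=0$, while (\ref{4.3.22}) is satisfied by every $m$ exactly when $a_4=0$, $a_5=b_4$, $a_3=b_5$, $b_3=0$. The generic situation is that neither degenerates, in which case the admissible slopes are the common real roots of the two polynomials; this is case (c). The specializations then follow: if (\ref{4.3.21}) degenerates (case (a)), only the cubic (\ref{4.3.22}) constrains $m$; if (\ref{4.3.22}) degenerates (case (b)), only the quadratic (\ref{4.3.21}) constrains $m$; and if both degenerate (case (d)), every $m\in\mathbb{R}$ gives an invariant line. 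This reproduces Statement 1 and the $l=1,2$ instances of Statement 2 of Theorem \ref{Thm 4.3.1}. For the second part of the theorem, the vertical and horizontal lines $x=k$ and $y=l$ are not of the form $y=mx$, so I would instead invoke the preceding Corollary, whose conditions (\ref{4.3.8}) reduce, for the quadratic system (\ref{4.3.9}), exactly to the displayed polynomial identities in $y$ (respectively $x$).

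The point requiring the most care is case (b), where the real-root hypothesis $(b_2-a_1)^2+4b_1a_2\ge 0$ must be attached: the quadratic (\ref{4.3.21}) may fail to have real solutions, and invariant \emph{real} lines exist only when its discriminant is nonnegative. By contrast, no analogous hypothesis appears in case (a), since the cubic (\ref{4.3.22}) has odd degree and hence always possesses at least one real root — an asymmetry that must be stated correctly for the ``if and only if'' to hold. The remaining effort is the routine algebra of rearranging the $x$- and $x^2$-coefficients into the signed forms (\ref{4.3.21}) and (\ref{4.3.22}) and of verifying the degeneracy conditions, which I would not belabor.
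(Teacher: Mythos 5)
Your proposal is correct and takes essentially the same approach as the paper, which presents Theorem \ref{Thm 4.3.2} as the $n=2$ specialization of Theorem \ref{Thm 4.3.1}: impose the tangency condition $\dot y = m\dot x$, equate the coefficients of $x$ and $x^2$ to obtain (\ref{4.3.21}) and (\ref{4.3.22}), and split into the four cases according to which of these two polynomials in $m$ vanishes identically. The only slight imprecision is your closing remark that (\ref{4.3.22}) always has a real root because it is of odd degree --- this holds only when $a_4\neq 0$ --- but that concerns your commentary on the asymmetry between cases (a) and (b), not the logic of the proof itself.
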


\begin{remark}
The Theorem \ref{Thm 4.3.1} corresponds to the equilibrium point ${\bm 0}$ of system (\ref{4.3.2}). If $(x_0,y_0)$ is any other equilibrium, then this result can be extended to obtain invariant lines of the form $(y-y_0)=m(x-x_0)$.  
\end{remark}

\begin{Ex}\label{Ex 3.1}
Consider,
\begin{equation*}
\begin{split}
\dot{x}&=x-4x^2+2y^2+10xy\\
\dot{y}&=y+4y^2+4xy . 
\end{split}
\end{equation*}
This system satisfies the condition 1(a) of  Theorem \ref{Thm 4.3.2}.
Here, $y=0$,\, $y=x$\, and $y=-4x$ are the lines invariant  under the flow of this system.
\end{Ex}

\begin{Ex}\label{Ex 3.2}
Consider,
\begin{equation*}
\begin{split}
\dot{x}&=2x^2\\
\dot{y}&=-3x^2+y^2 . 
\end{split}
\end{equation*}
This system satisfies the conditions 1(a) and 2 of Theorem \ref{Thm 4.3.2} and the invariant lines are given by
$x=0$,\, $y=3x$\, and $y=-x$.
\end{Ex}

\begin{Ex} \label{Ex 3.3}
Consider a planar quadratic system,
\begin{equation*}
\begin{split}
\dot{x}&=-x+y-x^2+3xy\\
\dot{y}&=8x+y+3y^2-xy . 
\end{split}
\end{equation*}
It can be checked that, the condition 1(b) in Theorem \ref{Thm 4.3.2} is satisfied by this system.\\
$\therefore$ $y=4x$,\, and $y=-2x$ are invariant lines.
\end{Ex}
 
\begin{Ex} \label{Ex 3.4}
Now, consider the system
\begin{equation*}
\begin{split}
\dot{x}&=3x-y-6x^2+y^2+2xy\\
\dot{y}&=6x-2y-18x^2+4y^2+3xy 
\end{split}
\end{equation*}
satisfying 1(c) of Theorem \ref{Thm 4.3.2}.\\
The lines, $y=2x$,\, and $y=3x$ are invariant.
\end{Ex}

\begin{Ex} \label{Ex 3.5}
Here we consider the planar quadratic system
\begin{equation*}
\begin{split}
\dot{x}&=3x-6x^2+2xy\\
\dot{y}&=3y+2y^2-6xy 
\end{split}
\end{equation*}
satisfying the condition 1(d) of Theorem \ref{Thm 4.3.2}. For this system the lines $y=mx$ are invariant for all $m\in\mathbb{R}$.
\end{Ex}
In the Figure \ref{Fig 1}, we sketch vector fields for the systems given in the Examples \ref{Ex 3.1} and \ref{Ex 3.5}.
 \begin{figure}[h]
              \subfloat[]{\includegraphics[width=0.45\textwidth]{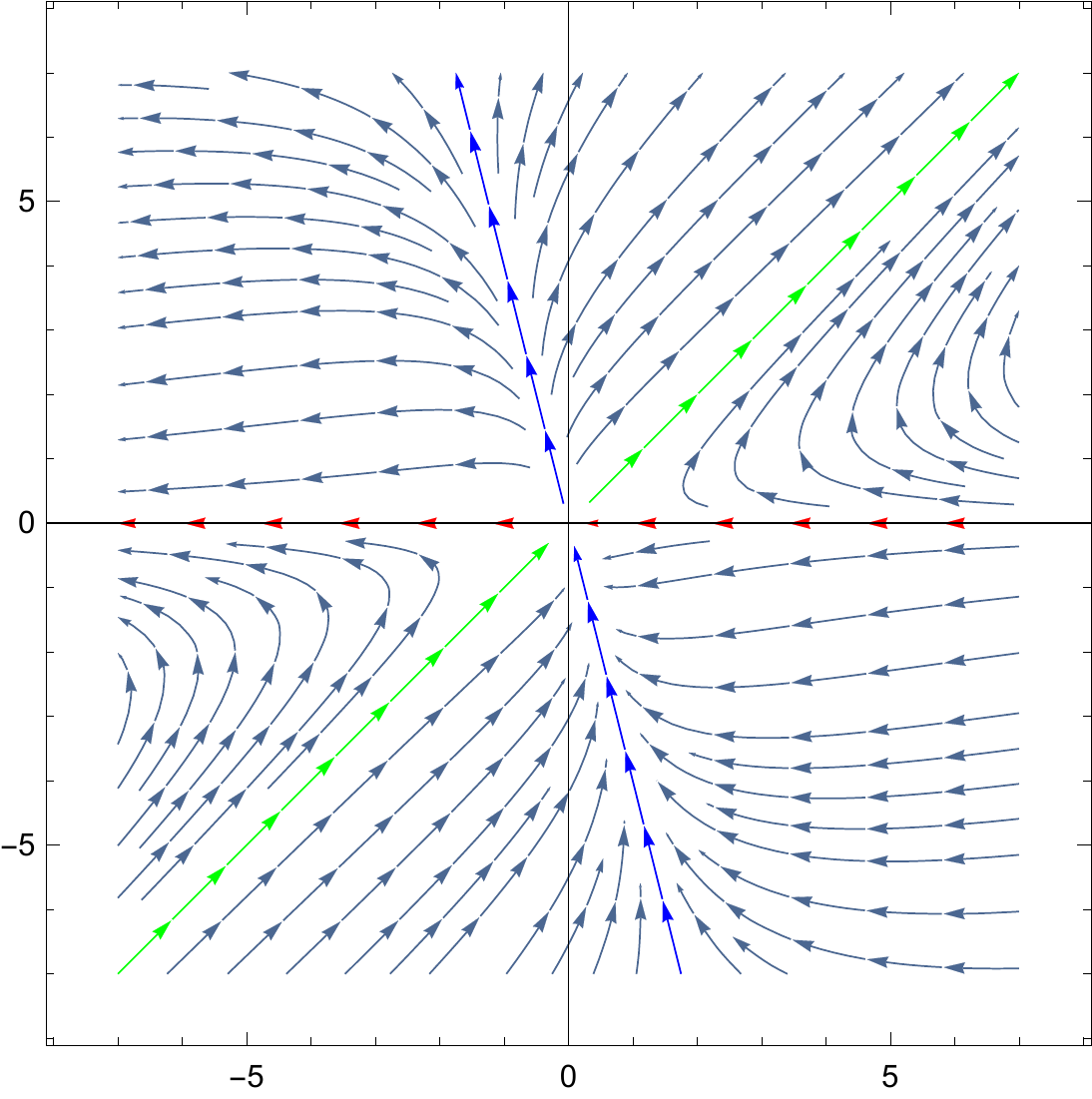}}
              \hfill 
              \subfloat[]{\includegraphics[width=0.46\textwidth]{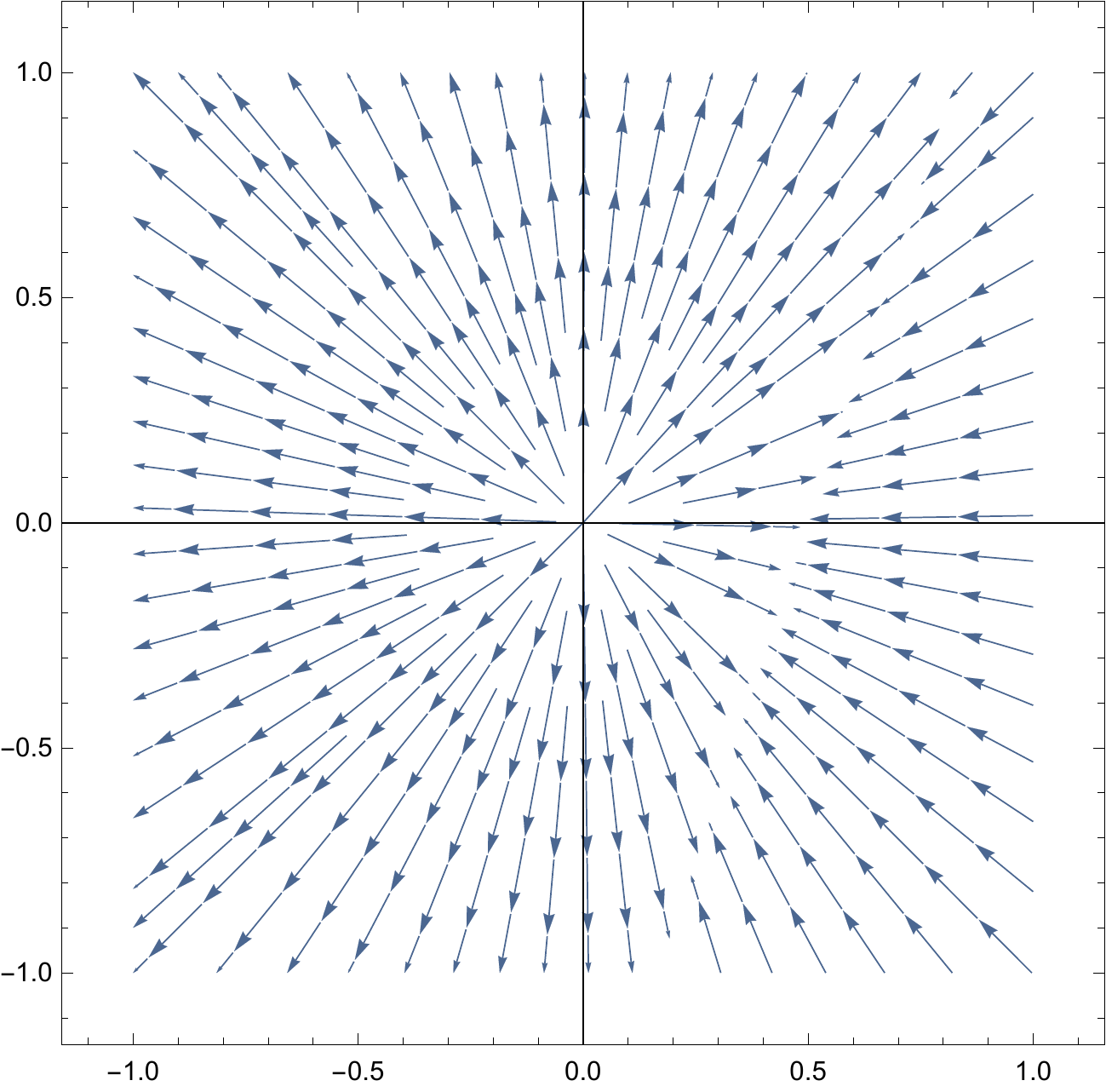}}
              \caption{Vector fields of the systems in Example 3.1 and 3.5 respectively
              } \label{Fig 1}
            \end{figure}

\begin{Ex}
Consider a planar quadratic system,
\begin{equation}
\begin{split}
\dot{x}&=2x^2-4x+2\\
\dot{y}&=6x-2y-3x^2+y^2-2. \label{4.5}
\end{split}
\end{equation}
Here, $x=1$, $y=3x-2$ and $y=-x+2$ are invariant under the flow of given system (see the Figure \ref{Fig 2}). 
\end{Ex}
\begin{figure}[h]
   \begin{center}
             \includegraphics[width=0.4\textwidth]{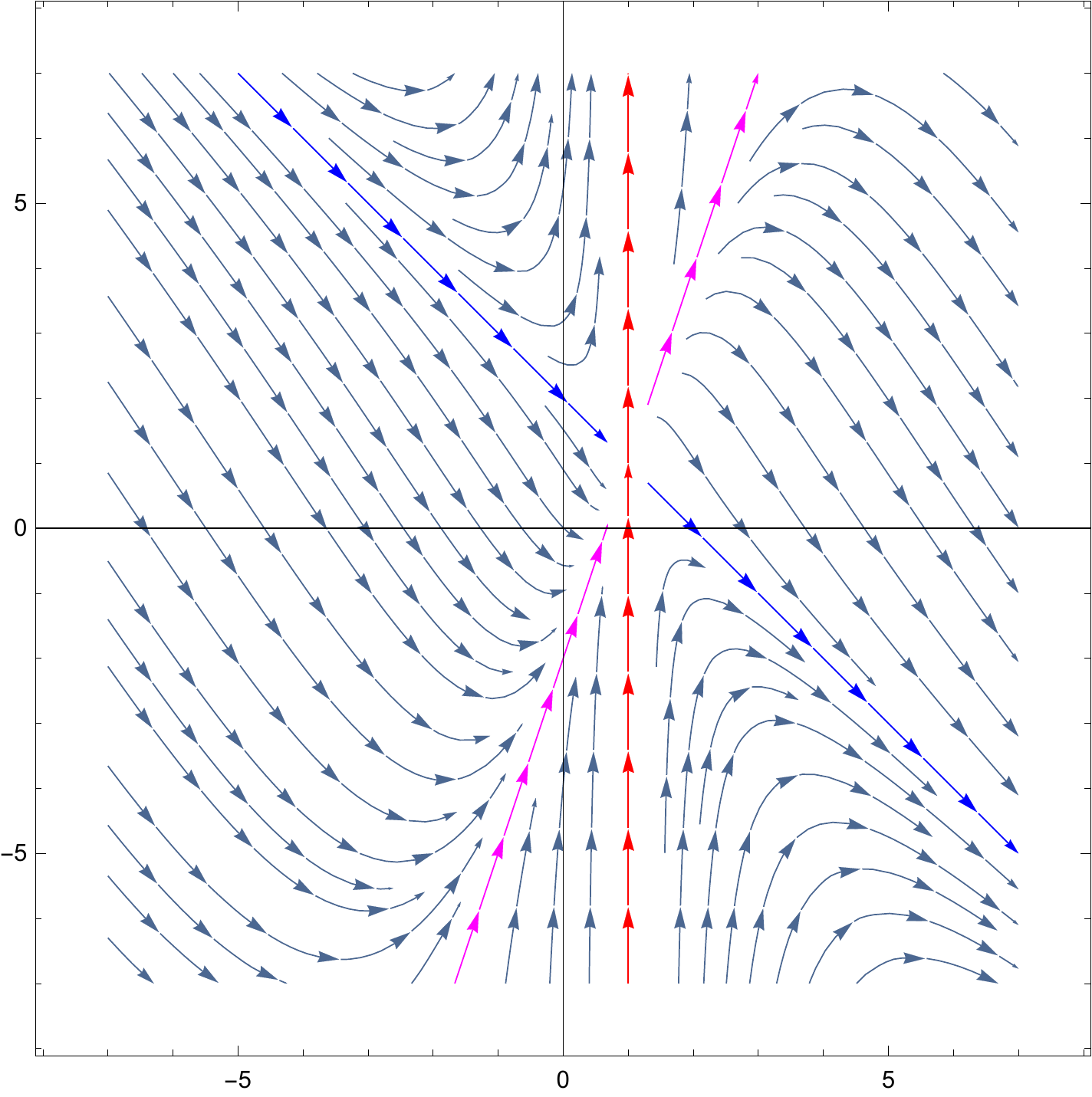}
            \caption{Vector field of system (\ref{4.5}).}         
   \label{Fig 2}
   \end{center}  
          \end{figure}

\subsection{Invariant parabolas}

\begin{The} \label{Thm 4.3.3}
Consider the planar quadratic system (\ref{4.3.9}).
The parabola $\bm {y =mx^2}$ is invariant if and only if,
\begin{enumerate}
\item $b_1=0,\,b_4=2a_5,\,a_4=0$ and
\item One of the following conditions $(a),\, (b)$, $(c)$ and $(d)$ hold:
\begin{enumerate}
\item $b_3=0$,\, $b_2=2a_1$,\, $b_5\ne2a_3$\, and\, $a_2\ne0$.\, (In this case $m=\frac{b_5-2a_3}{2a_2}$). 
\item $b_5=2a_3$,\, $a_2=0$,\, $b_3\ne0$\, and\, $b_2\ne2a_1$.\, (In this case $m=\frac{-b_3}{b_2-2a_1}$).
\item $b_3\ne0$,\, $b_2\ne2a_1$,\, $b_5\ne2a_3$,\, $a_2\ne0$ \,and\, $2a_2b_3+b_2b_5-2b_2a_3-2a_1b_5+4a_1a_3=0$.\, (In this case $m=\frac{-b_3}{b_2-2a_1}=\frac{b_5-2a_3}{2a_2}$).
\item $b_3=0$,\, $b_2=2a_1$,\, $b_5=2a_3$\,and\, $a_2=0$. (In this case $m$ is any real number).
\end{enumerate}
\end{enumerate}
\end{The}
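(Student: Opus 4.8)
The plan is to apply the tangency condition to the manifold $y=h(x)=mx^2$. Here $\mathrm{D}h(x)=2mx$, so invariance of the parabola is equivalent to the identity $2mx\,\dot{x}=\dot{y}$ holding for all $x$ once we substitute $y=mx^2$ into the right-hand sides of (\ref{4.3.9}). First I would perform this substitution: replacing $y$ by $mx^2$ turns $\dot{x}$ into $a_1x+(a_3+a_2m)x^2+a_5m\,x^3+a_4m^2x^4$ and $\dot{y}$ into $b_1x+(b_3+b_2m)x^2+b_5m\,x^3+b_4m^2x^4$. Multiplying the first by $2mx$ and subtracting the second produces a polynomial in $x$ of degree $5$ that must vanish identically.

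Next I would equate to zero the coefficients of $x,x^2,x^3,x^4,x^5$ (there is no constant term, since (\ref{4.3.9}) has none). Taking $m\neq0$, so that $y=mx^2$ is a genuine parabola, the coefficients of $x$, $x^5$ and $x^4$ give at once $b_1=0$, $a_4m^3=0\Rightarrow a_4=0$, and $(2a_5-b_4)m^2=0\Rightarrow b_4=2a_5$; these are exactly the three conditions collected in part 1 of the statement. The coefficients of $x^2$ and $x^3$ reduce, after dividing the $x^3$-equation by $m\neq0$, to the two linear equations in $m$
\begin{equation*}
(2a_1-b_2)m=b_3, \qquad 2a_2m=b_5-2a_3 .
\end{equation*}
Thus the whole problem collapses to deciding when this pair of equations admits a common root $m\neq0$.

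The final step is the case analysis on the degeneracy of these two equations, which reproduces (a)--(d). If both coefficients and both right-hand sides vanish, i.e. $b_3=0,\ b_2=2a_1,\ b_5=2a_3,\ a_2=0$, every $m$ is a solution, giving case (d) and infinitely many parabolas. If exactly one equation is nondegenerate it alone fixes $m$, and the nonvanishing of the corresponding right-hand side ($b_5\neq2a_3$ in (a), $b_3\neq0$ in (b)) is precisely what forces $m\neq0$, recovering the formulas $m=(b_5-2a_3)/(2a_2)$ and $m=-b_3/(b_2-2a_1)$. If both equations are nondegenerate, compatibility of the two expressions for $m$ follows by cross-multiplication, $-2a_2b_3=(b_5-2a_3)(b_2-2a_1)$, which rearranges to the quadratic relation $2a_2b_3+b_2b_5-2b_2a_3-2a_1b_5+4a_1a_3=0$ of case (c), with $m$ nonzero since both numerators are nonzero there.

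The main obstacle is organizational rather than conceptual: keeping the casework exhaustive and mutually exclusive, and --- the only genuinely delicate point --- systematically tracking the requirement $m\neq0$ throughout. That requirement is what legitimizes cancelling $m$, $m^2$ and $m^3$ when extracting part 1, and it is also what turns the ``$\neq$'' hypotheses in (a)--(c) into guarantees that the resulting $m$ is nonzero. I would therefore state once at the outset that $y=mx^2$ is a parabola exactly when $m\neq0$, so that the $m=0$ solutions (which collapse to the invariant line $y=0$) are correctly excluded.
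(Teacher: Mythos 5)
Your proof is correct and takes essentially the same route as the paper's: apply the tangency condition $2mx\,\dot{x}=\dot{y}$ along $y=mx^2$, equate the coefficients of the resulting quintic in $x$ to zero to obtain $b_1=0$, $a_4=0$, $b_4=2a_5$ together with the two linear equations $(2a_1-b_2)m=b_3$ and $2a_2m=b_5-2a_3$, and then perform the degeneracy case analysis yielding (a)--(d). If anything, your explicit tracking of the requirement $m\neq 0$ is more careful than the paper's argument, which cancels powers of $m$ without comment.
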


\begin{proof}
The tangency condition shows that the parabola
 \begin{equation}
y=mx^2 \label{4.4.22}
\end{equation}
is invariant if and only if 
\begin{equation*}
b_1x+(b_2m+b_3-2a_1m)x^2+(b_5-2a_2m-2a_3)mx^3+(b_4-2a_5)m^2x^4-2a_4m^3x^5=0\quad \forall x\in\mathbb{R}.
\end{equation*} 
This holds if and only if, 
\begin{equation}
b_1=0, \,b_4=2a_5,\,a_4=0, \label{4.4.23}
\end{equation}
\begin{equation}
b_2m+b_3-2a_1m=0 \label{4.4.24}
\end{equation}
and 
\begin{equation}
b_5-2a_2m-2a_3=0. \label{4.4.25}
\end{equation}
From (\ref{4.4.24}), we have 
\begin{equation}
m=\frac{-b_3}{b_2-2a_1} \label{4.4.26}
\end{equation}
and from (\ref{4.4.25}), we have 
\begin{equation}
m=\frac{b_5-2a_3}{2a_2}. \label{4.4.27}
\end{equation}
Therefore, the parabola (\ref{4.4.22}) is invariant under the flow of system (\ref{4.3.9}) if and only if one of the conditions $(a),\, (b)$, $(c)$ and $(d)$ hold along with the condition (\ref{4.4.23}).
\end{proof}

In the Table \ref{Tab1}, we provide examples supporting the Theorem \ref{Thm 4.3.3}.

\begin{table}[h] 
\centering 
\renewcommand{\arraystretch}{1.3}
\begin{tabular}{|c |c| c| c|}
    \hline
   Ex. no. & Planar quadratic system & Related condition in Theorem \ref{Thm 4.3.3} & Invariant parabolas \\
    \hline
 $i$&  $\begin{array} {lcl} \dot{x}&=&-2x+y+3x^2-xy \\ \dot{y}&=&-4y-2y^2+5xy \end{array}$ &  (1) and 2(a) & $y=-\frac{1}{2}x^2$  \\
    \hline
$ii$ &  $\begin{array} {lcl} \dot{x}&=&3x-x^2+2xy \\ \dot{y}&=&-2y+5x^2+4y^2-2xy \end{array}$ &  (1) and 2(b) & $y=\frac{5}{8}x^2$ \\
     \hline
$iii$ &  $\begin{array} {lcl} \dot{x}&=&x-y+2x^2+xy \\ \dot{y}&=&y+x^2+2y^2+2xy \end{array}$ & (1) and 2(c)  & $y=x^2$ \\
    \hline
$iv$ &  $\begin{array} {lcl} \dot{x}&=&-x+2x^2-3xy \\ \dot{y}&=&-2y-6y^2+4xy \end{array}$ & (1) and 2(d) &  $y=mx^2$, $\forall$ $m\in\mathbb{R}$ \\
    \hline
    \end{tabular}
    \caption{Examples supporting to the Theorem \ref{Thm 4.3.3}.}
    \label{Tab1}
    \end{table}  
In the Figure \ref{Fig 3}, we sketch vector fields for the systems given in the examples $(iii)$ and $(iv)$.
   \begin{figure}[h]
                 \subfloat[$y=x^2$ is invariant parabola]{\includegraphics[width=0.46\textwidth]{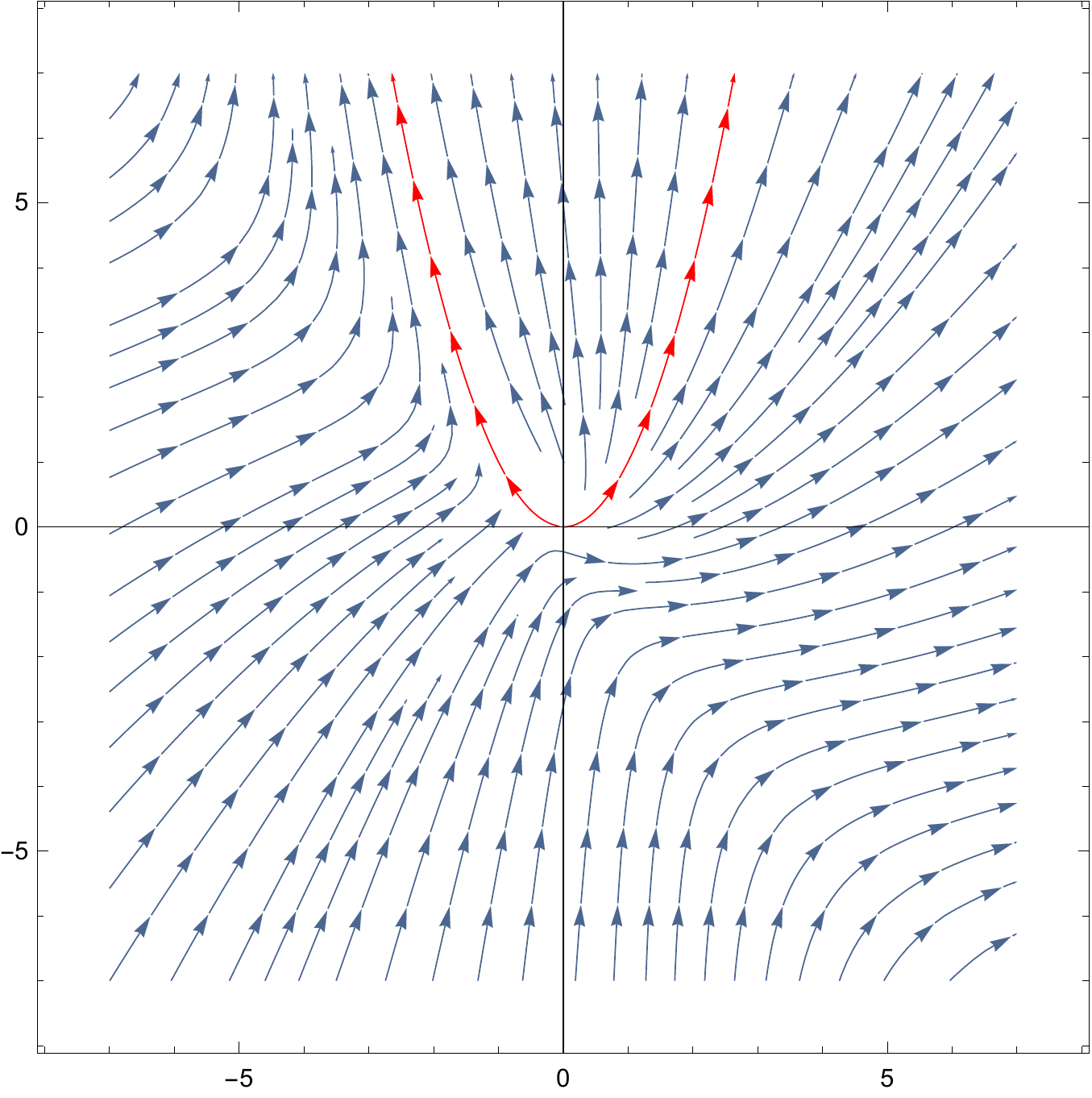}}
                 \hfill 
                 \subfloat[$y=mx^2$, $\forall m\in\mathbb{R}$ are invariant parabolas]{\includegraphics[width=0.46\textwidth]{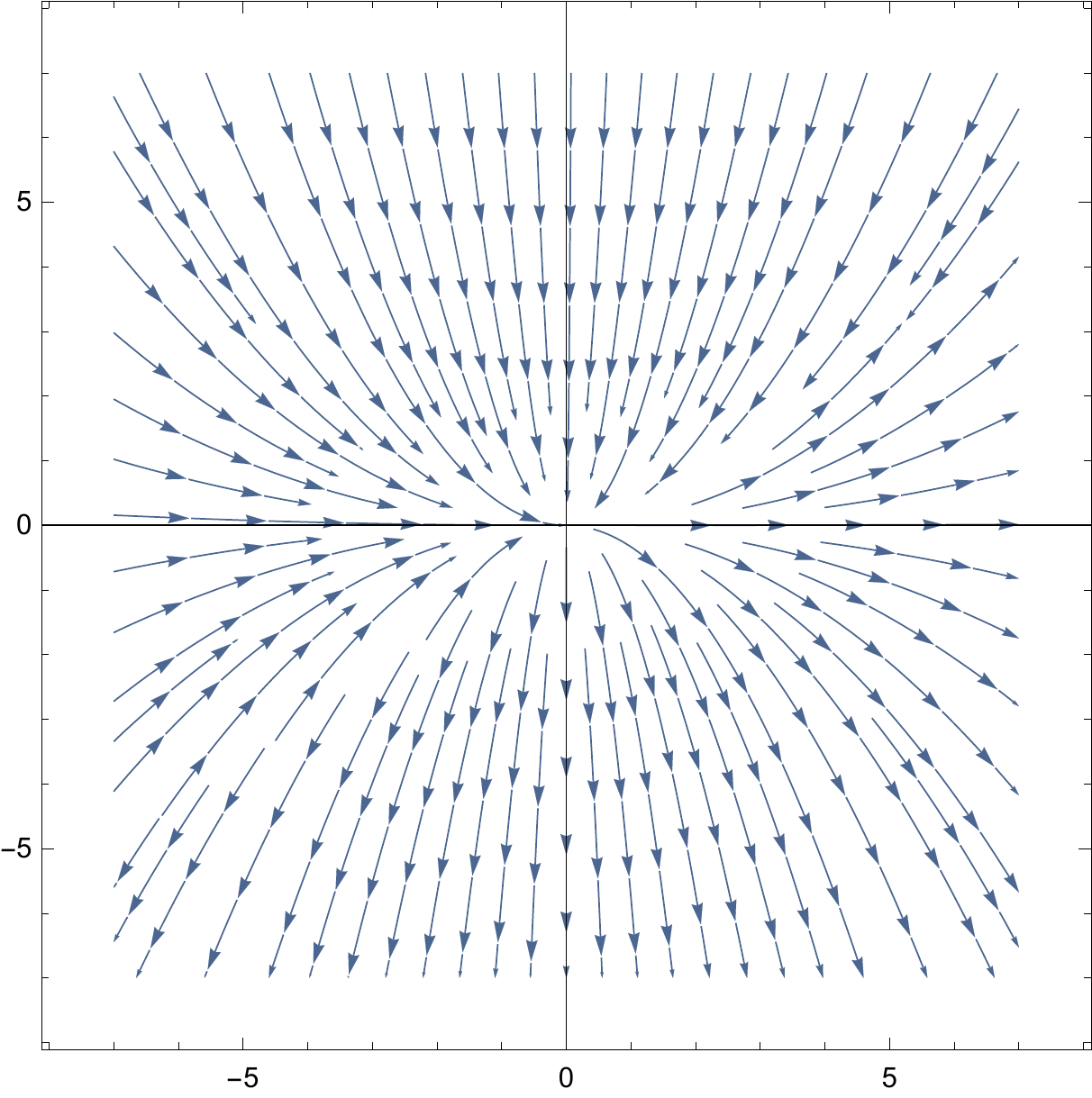}}
                 \caption{Invariant parabolas $y=mx^2$
                 } \label{Fig 3}
               \end{figure} 

\begin{The}\label{Thm 4.3.4}
There exists invariant parabola $\bm {x =my^2}$ for the system (\ref{4.3.9}) if and only if
\begin{enumerate}
\item $a_2=0,\,a_3=2b_5,\,b_3=0$ \, and
\item One of the conditions $(a),\, (b)$,\, $(c)$ and $(d)$ hold.
\begin{enumerate}
\item $a_4=0$,\, $a_1=2b_2$,\, $a_5\ne2b_4$\, and\, $b_1\ne0$.\, (In this case $m=\frac{a_5-2b_4}{2b_1}$). 
\item $a_5=2b_4$,\, $b_1=0$,\, $a_4\ne0$\, and\, $a_1\ne2b_2$.\, (In this case $m=\frac{-a_4}{a_1-2b_2}$).
\item $a_4\ne0$,\, $a_1\ne2b_2$,\, $a_5\ne2b_4$,\, $b_1\ne0$ \,and\, $a_1a_5-2a_1b_4-2b_2a_5+4b_2b_4+2a_4b_1=0$.\, (In this case $m=\frac{a_5-2b_4}{2b_1}=\frac{-a_4}{a_1-2b_2}$).
\item $a_4=0$,\, $a_1=2b_2$,\, $a_5=2b_4$\, and $b_1=0$. (In this case, $x=my^2$,\, $\forall\, m\in\mathbb{R}$).
\end{enumerate}
\end{enumerate}
\end{The}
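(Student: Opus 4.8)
The plan is to apply the tangency condition to the curve $x = my^2$, in direct parallel with the proof of Theorem \ref{Thm 4.3.3}. Differentiating $x = my^2$ gives $\dot{x} = 2my\,\dot{y}$, so the parabola is invariant under the flow of (\ref{4.3.9}) if and only if $\dot{x} - 2my\,\dot{y}$ vanishes identically along the curve. First I would substitute $x = my^2$ into both components of (\ref{4.3.9}), turning $\dot{x}$ and $\dot{y}$ into polynomials in the single variable $y$, then form $\dot{x} - 2my\,\dot{y}$ and collect the coefficients of $y, y^2, y^3, y^4, y^5$. Because a nonzero polynomial in $y$ cannot vanish for all $y$, invariance is equivalent to the simultaneous vanishing of all five coefficients, which delivers the ``if and only if'' for free.

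Setting the coefficients to zero produces five scalar equations. The coefficient of $y$ gives $a_2 = 0$; restricting to a genuine parabola ($m \ne 0$) and cancelling the factors $m^2$ and $m^3$, the coefficients of $y^4$ and $y^5$ give $a_3 = 2b_5$ and $b_3 = 0$. These three identities are exactly the unconditional requirements in part (1). The coefficient of $y^2$ and, after cancelling one factor of $m$, the coefficient of $y^3$ reduce to the two linear-in-$m$ equations $(a_1 - 2b_2)m + a_4 = 0$ and $2b_1 m - (a_5 - 2b_4) = 0$, whose solutions are the candidate slopes $m = \frac{-a_4}{a_1 - 2b_2}$ and $m = \frac{a_5 - 2b_4}{2b_1}$ recorded in the statement.

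The substantive step is the case analysis on these two equations. Each of them either \emph{determines} $m$ (its coefficient of $m$ is nonzero) or holds \emph{identically} (both of its coefficients vanish). Case (a) is when the $y^2$-equation is identically satisfied, i.e. $a_1 = 2b_2$ and $a_4 = 0$, while the $y^3$-equation pins down $m$; case (b) is the mirror situation with the roles reversed; case (c) is when both equations genuinely constrain $m$, forcing the two candidate slopes to coincide, i.e. $\frac{-a_4}{a_1-2b_2} = \frac{a_5-2b_4}{2b_1}$, which after clearing denominators is the compatibility relation $a_1a_5 - 2a_1b_4 - 2b_2a_5 + 4b_2b_4 + 2a_4b_1 = 0$; and case (d) is when all four coefficients vanish, so every $m$ works. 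In cases (a)--(c) I would also attach the strict inequalities ($b_1 \ne 0$, $a_5 \ne 2b_4$, $a_1 \ne 2b_2$, $a_4 \ne 0$, as appropriate) that guarantee a unique finite $m$ with $m \ne 0$, so that the curve is a true parabola rather than the degenerate line $x = 0$.

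I expect the only real difficulty to be bookkeeping rather than analysis: verifying that (a)--(d) exhaust every logically possible configuration of the two linear equations, and that the attached inequalities are precisely those excluding the degenerate slope $m = 0$. As an independent check I would exploit the symmetry with Theorem \ref{Thm 4.3.3}: the swap $x \leftrightarrow y$ is a linear involution, hence sends invariant sets to invariant sets, and carries $x = my^2$ to $y = mx^2$ while relabelling the coefficients as $(\tilde a_1,\tilde a_2,\tilde a_3,\tilde a_4,\tilde a_5) = (b_2,b_1,b_4,b_3,b_5)$ and $(\tilde b_1,\tilde b_2,\tilde b_3,\tilde b_4,\tilde b_5) = (a_2,a_1,a_4,a_3,a_5)$. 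Substituting this relabelling into Theorem \ref{Thm 4.3.3} reproduces part (1) and cases (a)--(d) verbatim, confirming both the identities and the case split.
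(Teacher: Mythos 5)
Your proposal is correct and takes essentially the same route as the paper: the paper proves Theorem \ref{Thm 4.3.3} by exactly this tangency-condition computation (substitute the curve, collect powers, equate coefficients, split into the four cases) and states Theorem \ref{Thm 4.3.4} as its mirror image, which your direct calculation for $x=my^2$ and your $x \leftrightarrow y$ relabelling check both supply. The coefficient equations, the exhaustive case analysis (a)--(d), and the compatibility relation $a_1a_5-2a_1b_4-2b_2a_5+4b_2b_4+2a_4b_1=0$ in case (c) all come out exactly as you state.
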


In the Table \ref{Tab2}, we provide examples supporting to the Theorem \ref{Thm 4.3.4}.
The corresponding vector fields are sketched in Figure \ref{Fig 4}.

\begin{table}[h] 
\centering 
\renewcommand{\arraystretch}{1.3}
\begin{tabular}{|c |c| c| c|}
    \hline
   Ex. no. & Planar quadratic system & Related condition in Theorem \ref{Thm 4.3.4} & Invariant parabolas \\
    \hline
 $v$&  $\begin{array} {lcl} \dot{x}&=&-2x+6x^2-2xy \\ \dot{y}&=&4x-y+2y^2+3xy \end{array}$ &  (1) and 2(a) & $x=-\frac{3}{4}y^2$  \\
    \hline
$vi$ &  $\begin{array} {lcl} \dot{x}&=&2x-2x^2+y^2+6xy \\ \dot{y}&=&-y+3y^2-xy \end{array}$ &  (1) and 2(b) & $x=-\frac{1}{4}y^2$ \\
     \hline
$vii$ &  $\begin{array} {lcl} \dot{x}&=&2x-2x^2+y^2-xy \\ \dot{y}&=&-7x+2y+3y^2-xy \end{array}$ & (1) and 2(c)  & $x=\frac{1}{2}y^2$ \\
    \hline
$viii$ &  $\begin{array} {lcl} \dot{x}&=&4x-6x^2-10xy \\ \dot{y}&=&2y-5y^2-3xy \end{array}$ & (1) and 2(d) &  $x=my^2$, $\forall$ $m\in\mathbb{R}$ \\
    \hline
    \end{tabular}
    \caption{Examples supporting to the Theorem \ref{Thm 4.3.4}.}
    \label{Tab2}
    \end{table}  

 \begin{figure}[h]
                 \subfloat[vector fields of example  ($vi$)]{\includegraphics[width=0.46\textwidth]{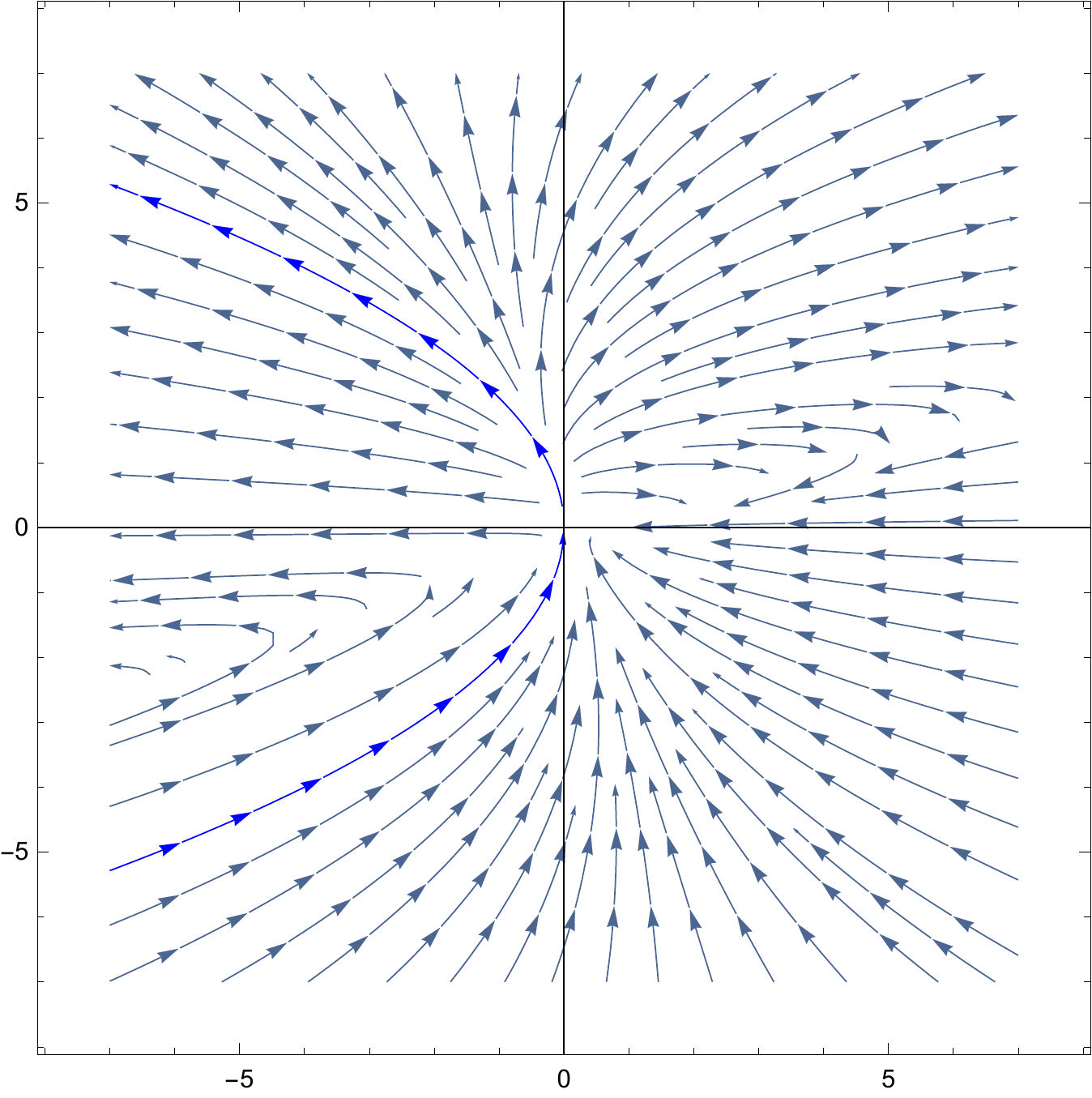}}
                 \hfill 
                 \subfloat[vector fields of example  ($viii$)]{\includegraphics[width=0.46\textwidth]{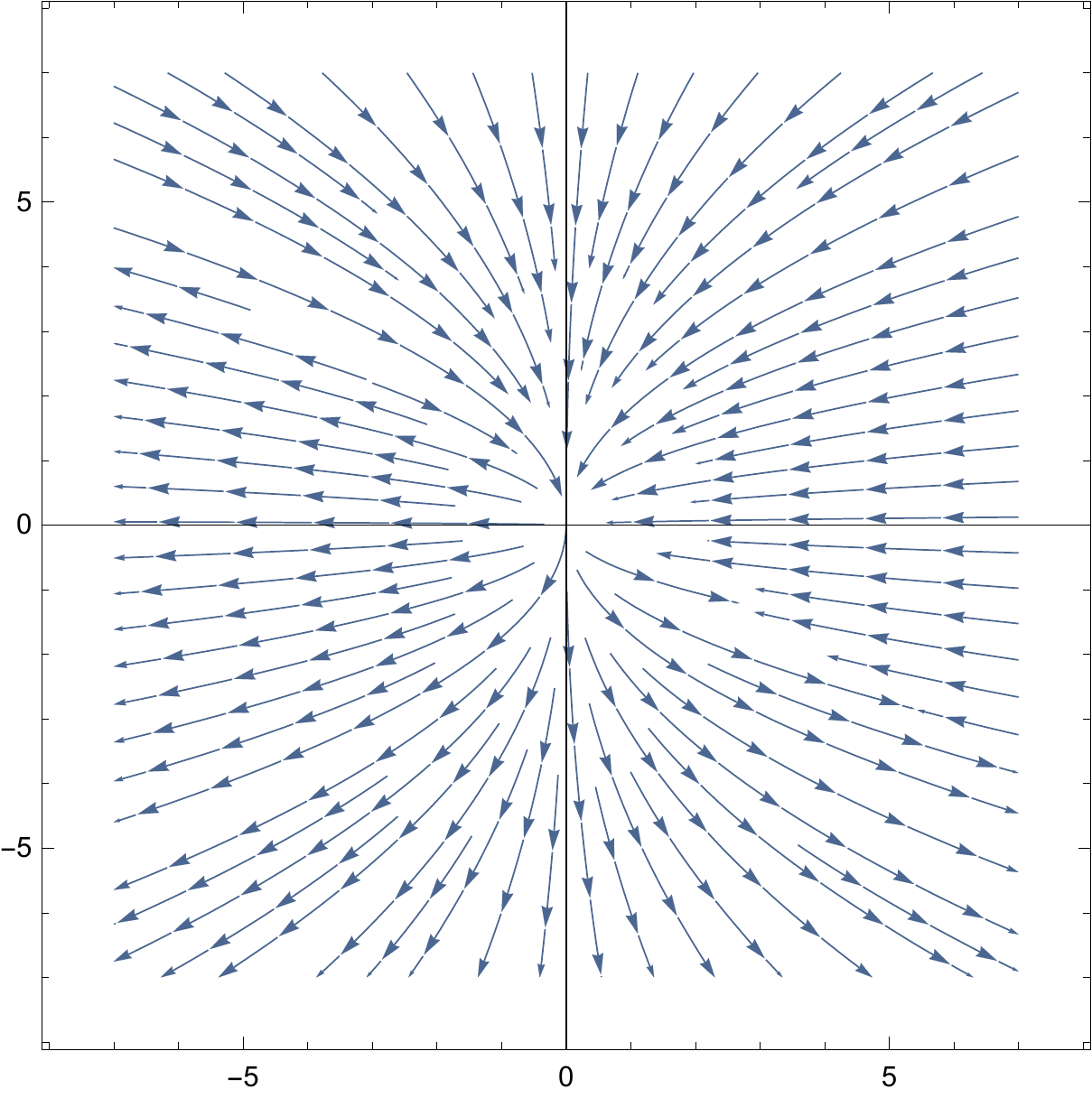}}
                 \caption{Invariant parabolas $x=my^2$
                 } \label{Fig 4}
               \end{figure}

The following Theorem \ref{Thm 4.3.5} describes the general case.
\begin{The}\label{Thm 4.3.5}
The parabola
\begin{equation}
m_1x^2+m_2xy+m_3y^2+m_4x+m_5y=0 \label{4.4.10}
\end{equation}
is invariant under the flow of system (\ref{4.3.9}), if and only if 
\begin{equation*}
\begin{split}
a_1&=b_2+b_1(-\cot\theta+\tan\theta) \quad \textrm{or} \quad \theta=\frac{1}{2}\cotinv(\frac{b_2-a_1}{b_1}),\\
a_2&=b_1,\\
a_3&=\frac{1}{16}\left[b_3 \sec\theta\,(29\sin\theta+\sin3\theta)-2b_4 \sin2\theta +2b_5(3+\cos2\theta)\right],\\
a_4&=\frac{1}{8}\tan\theta\left[b_4(3+\cos2\theta)+\tan\theta(-2b_5 \sin^2\theta+b_3 (3+\cos2\theta)\tan\theta)\right]\quad \mathrm{and}\\
a_5&=\frac{1}{16} \sec^2\theta\left[-4b_3(5+\cos2\theta)\sin^2\theta -2(-5\cos\theta+\cos3\theta)(b_4\cos\theta+b_5\sin\theta)\right].
\end{split}
\end{equation*}
The coefficients of invariant parabola (\ref{4.4.10}) are given by,
\begin{equation*}
\begin{split}
m_1&=\frac{1}{16}\sec\theta\left[b_3(3+\cos2\theta)^2+\sin2\theta(b_4\sin2\theta-b_5(3+\cos2\theta))\right],\\
m_2&=\frac{-1}{8}\sec\theta\left[b_3(3+\cos2\theta)^2+\sin2\theta(b_4\sin2\theta-b_5(3+\cos2\theta))\right]\tan\theta,\\
m_3&=\frac{1}{16}\sec\theta\left[b_3(3+\cos2\theta)^2+\sin2\theta(b_4\sin2\theta-b_5(3+\cos2\theta))\right]\tan^2\theta,\\
m_4&=-2\sin\theta(-b_1\cot\theta+b_2)+\sin\theta(b_2+b_1\tan\theta)\quad \mathrm{and}\\
m_5&=-2\cos\theta(-b_1\cot\theta+b_2)+\cos\theta(b_2+b_1\tan\theta),\, \text{where}\,\, 0\le \theta\le 2\pi.
\end{split}
\end{equation*}
\end{The}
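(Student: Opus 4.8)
The plan is to reduce this general statement to the already-established Theorem \ref{Thm 4.3.3} by a rotation of coordinates. First I observe that the coefficients listed in the conclusion satisfy $m_2 = -2m_1\tan\theta$ and $m_3 = m_1\tan^2\theta$, so the quadratic part of (\ref{4.4.10}) is the perfect square $m_1(x - y\tan\theta)^2$; its discriminant $m_2^2 - 4m_1m_3$ vanishes, confirming that (\ref{4.4.10}) is genuinely a parabola whose axis direction is fixed by the angle $\theta$. This is precisely the information needed to align the curve with a coordinate axis, and it is the reason the parameter $\theta$ is the natural one in which to phrase the result.

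Next I introduce the rotation
\begin{equation*}
x = X\cos\theta + Y\sin\theta, \qquad y = -X\sin\theta + Y\cos\theta,
\end{equation*}
whose inverse gives $X = x\cos\theta - y\sin\theta$ and hence $X^2 = \cos^2\theta\,(x - y\tan\theta)^2$. Under this change of variables the general parabola (\ref{4.4.10}) is carried, after absorbing the common scalar $m_1\sec^2\theta$, to the canonical parabola $Y = MX^2$ in the new frame. Because the map is linear, the quadratic system (\ref{4.3.9}) transforms into another planar quadratic system of the same form (\ref{4.3.9}), with new coefficients $\tilde a_i,\tilde b_i$ that are explicit trigonometric combinations of $a_i,b_i$ and $\theta$: the linear part transforms by matrix conjugation, while the homogeneous quadratic part transforms as the associated symmetric two-tensor. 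In both cases the output is again homogeneous of the correct degree, so the hypotheses of Theorem \ref{Thm 4.3.3} apply verbatim in the rotated frame.

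With the system rewritten in $(X,Y)$, I apply Theorem \ref{Thm 4.3.3}: the parabola $Y = MX^2$ is invariant if and only if $\tilde b_1 = 0$, $\tilde b_4 = 2\tilde a_5$, $\tilde a_4 = 0$, together with one of the sub-cases (a)--(d), and these simultaneously determine $M$. Substituting the rotation formulas for $\tilde a_i,\tilde b_i$ and solving, the alignment condition yields $a_2 = b_1$ together with the relation determining $\theta$, namely $a_1 = b_2 + b_1(-\cot\theta + \tan\theta)$ (equivalently $\theta = \tfrac12\cotinv(\tfrac{b_2-a_1}{b_1})$); the conditions $\tilde a_4 = 0$ and $\tilde b_4 = 2\tilde a_5$ unwind into the stated formulas for $a_3,a_4,a_5$, and re-expressing $Y = MX^2$ in the original variables produces the displayed coefficients $m_1,\dots,m_5$. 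The converse requires no extra argument, since rotation, the equivalence supplied by Theorem \ref{Thm 4.3.3}, and the back-substitution are each reversible.

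The main obstacle is entirely computational rather than conceptual: it lies in carrying out the coefficient transformation under the rotation and then compressing the back-substituted expressions into the compact forms involving $\sec\theta$, $\sin2\theta$ and $\cos2\theta$ that appear in the statement. I expect the calculation to be manageable if the linear part (handled by conjugation) is kept strictly separate from the quadratic part (handled by the tensor transformation) and the double-angle identities are applied systematically at the end; the conceptual content is already fully contained in Theorem \ref{Thm 4.3.3}, and the rotation merely transports it to a general parabola through the origin.
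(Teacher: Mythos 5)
First, a point of context: the paper never proves Theorem~\ref{Thm 4.3.5} --- it is stated bare and followed immediately by an example --- so there is no ``paper proof'' to measure you against. Your rotation-reduction to Theorem~\ref{Thm 4.3.3} is almost certainly the derivation the authors intended, given the parametrization by $\theta$. Moreover, the sufficiency (``if'') half of your plan is sound: with the stated coefficients one checks $m_2=-2m_1\tan\theta$, $m_3=m_1\tan^2\theta$ \emph{and} $m_4\cos\theta-m_5\sin\theta=0$, so under your rotation the curve (\ref{4.4.10}) becomes exactly $Y=MX^2$ (no linear $X$ term), the transformed system is again of the form (\ref{4.3.9}), and Theorem~\ref{Thm 4.3.3} plus back-substitution gives invariance.

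The gap is in the ``only if'' direction, and it is conceptual, not computational. Two distinct problems. (i) An \emph{arbitrary} invariant parabola of the form (\ref{4.4.10}) rotates to $Y=MX^2+KX$, and the linear term vanishes only when $m_4\cos\theta-m_5\sin\theta=0$, which you are not entitled to assume in the converse direction. Theorem~\ref{Thm 4.3.3} says nothing about $K\neq0$, and such invariant parabolas do exist: for $\dot x=x$, $\dot y=y+x^2$ one has $\frac{d}{dt}(x^2+x-y)=x^2+x-y$, so $y=x^2+x$ is invariant; rotating this system produces a quadratic system with an invariant tilted parabola through the origin that is not of the stated form. (ii) Your assertion that ``the alignment condition yields $a_2=b_1$ together with the relation determining $\theta$'' is false. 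Alignment is the \emph{single} equation $\tilde b_1=(a_1-b_2)\sin\theta\cos\theta+b_1\cos^2\theta-a_2\sin^2\theta=0$; it does not split into those two conditions. Since the antisymmetric part of the linear part commutes with rotations, $\tilde a_2-\tilde b_1=a_2-b_1$ is a rotation invariant, so the paper's condition $a_2=b_1$ is equivalent to imposing $\tilde a_2=0$ \emph{in addition to} $\tilde b_1=0$, i.e.\ to selecting only subcases (b)/(d) of Theorem~\ref{Thm 4.3.3}. Subcases (a)/(c), with $\tilde a_2\neq0$, also yield invariant tilted parabolas in systems with $a_2\neq b_1$: rotate example $(i)$ of Table~\ref{Tab1} (where $a_2-b_1=1$ and $y=-x^2/2$ is invariant, via subcase (a)) by $\pi/4$.

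The upshot is that an honest execution of your own strategy does not ``unwind into the stated formulas''; it produces strictly weaker necessary conditions and thereby shows that the biconditional of Theorem~\ref{Thm 4.3.5}, read literally, cannot be proved because it is not true --- the stated conditions capture only the subfamily corresponding to $\tilde a_2=\tilde b_1=0$ and vertex-at-origin parabolas. To salvage the argument you must either restrict the claim to that subfamily (and say so explicitly), or extend Theorem~\ref{Thm 4.3.3} to curves $Y=MX^2+KX$ and carry all four subcases through the rotation, which changes the conclusion of the theorem rather than confirming it.
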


\begin{Ex}
Consider the following system,
\begin{equation}
\begin{split}
\dot{x}&=\frac{1}{2}x-\frac{5}{2}y+2\sqrt{2}x^2+2\sqrt{2}y^2\\
\dot{y}&=-\frac{5}{2}x+\frac{1}{2}y+\frac{3}{\sqrt{2}}x^2+\frac{7}{\sqrt{2}}y^2-\sqrt{2}xy. \label{4.6}
\end{split}
\end{equation}

Here, \,$5x^2-10xy+5y^2-8\sqrt{2}x-8\sqrt{2}y=0$\, is invariant under the flow of this system (see the Figure \ref{Fig 5}).
\end{Ex}
\begin{figure}[h]
   \begin{center}
             \includegraphics[width=0.45\textwidth]{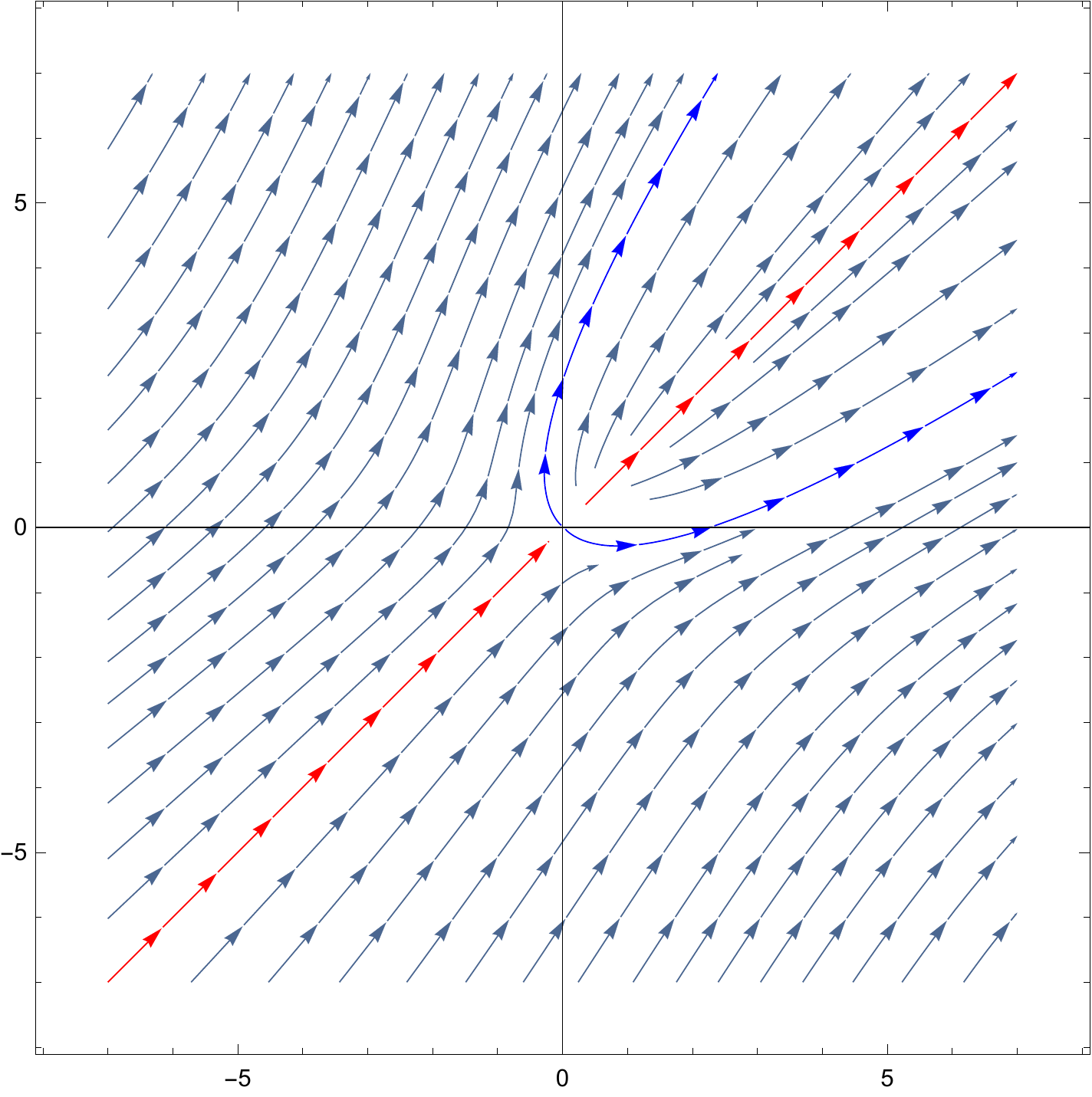}
            \caption{Vector field of system (\ref{4.6}).}         
   \label{Fig 5}
   \end{center}  
          \end{figure} 
          
 {\bf Note:}   Invariant parabolas passing through equilibrium points $(x_0,y_0)$ other than origin can be obtained using similar results.       

\subsection{Hamiltonian systems}
 The system
 \begin{equation}
\begin{split}
\dot{x}&=f(x,\,y)\\
\dot{y}&=g(x,\,y)\\ \label{4.3.31}
\end{split}
\end{equation}
 is Hamiltonian if and only if $\exists$ a function $H(x,\,y)$ \cite{Perko},  such that 
\begin{equation}
\frac{\partial H}{\partial y} =f(x,\,y)\quad
\mathrm{and}\quad \frac{\partial H}{\partial x} =-g(x,\,y). \,\, \label{4.3.32}
\end{equation}
i.e. if and only if \,\,$\frac{\partial f}{\partial x}+\frac{\partial g}{\partial y}=0$.\\
Note that, the curves 
\begin{equation}
H(x,y)=c \label{4.3.33}
\end{equation}
are invariant under system (\ref{4.3.31}), because $\frac{\mathrm{d}H}{\mathrm{d}t}=0$\,\, (by (\ref{4.3.32})) along the solution trajectories.
\par In particular, if the curve (\ref{4.3.33}) passes through a saddle equilibrium, then it is separatrix.

\begin{Ex}
Consider a planar quadratic system,
\begin{equation}
\begin{split}
\dot{x}&=y-\sqrt{2}xy\\
\dot{y}&=\frac{1}{2}(2x+\sqrt{2}x^2+\sqrt{2}y^2) . 
\end{split} \label{4.7}
\end{equation}
This system is a the Hamiltonian system and the  Hamiltonian is given by,
\begin{equation*}
H(x,\,y)=-\frac{1}{2}x^2+\frac{1}{2}y^2-\frac{1}{3\sqrt{2}}x^3-\frac{1}{\sqrt{2}}xy^2.
\end{equation*}
Here, \,$-\frac{1}{2}x^2+\frac{1}{2}y^2-\frac{1}{3\sqrt{2}}x^3-\frac{1}{\sqrt{2}}xy^2=0$\, gives separatrix for this system and it is shown in the following Figure \ref{Fig 6}.
\end{Ex}
\begin{figure}[h]
   \begin{center}
             \includegraphics[width=0.45\textwidth]{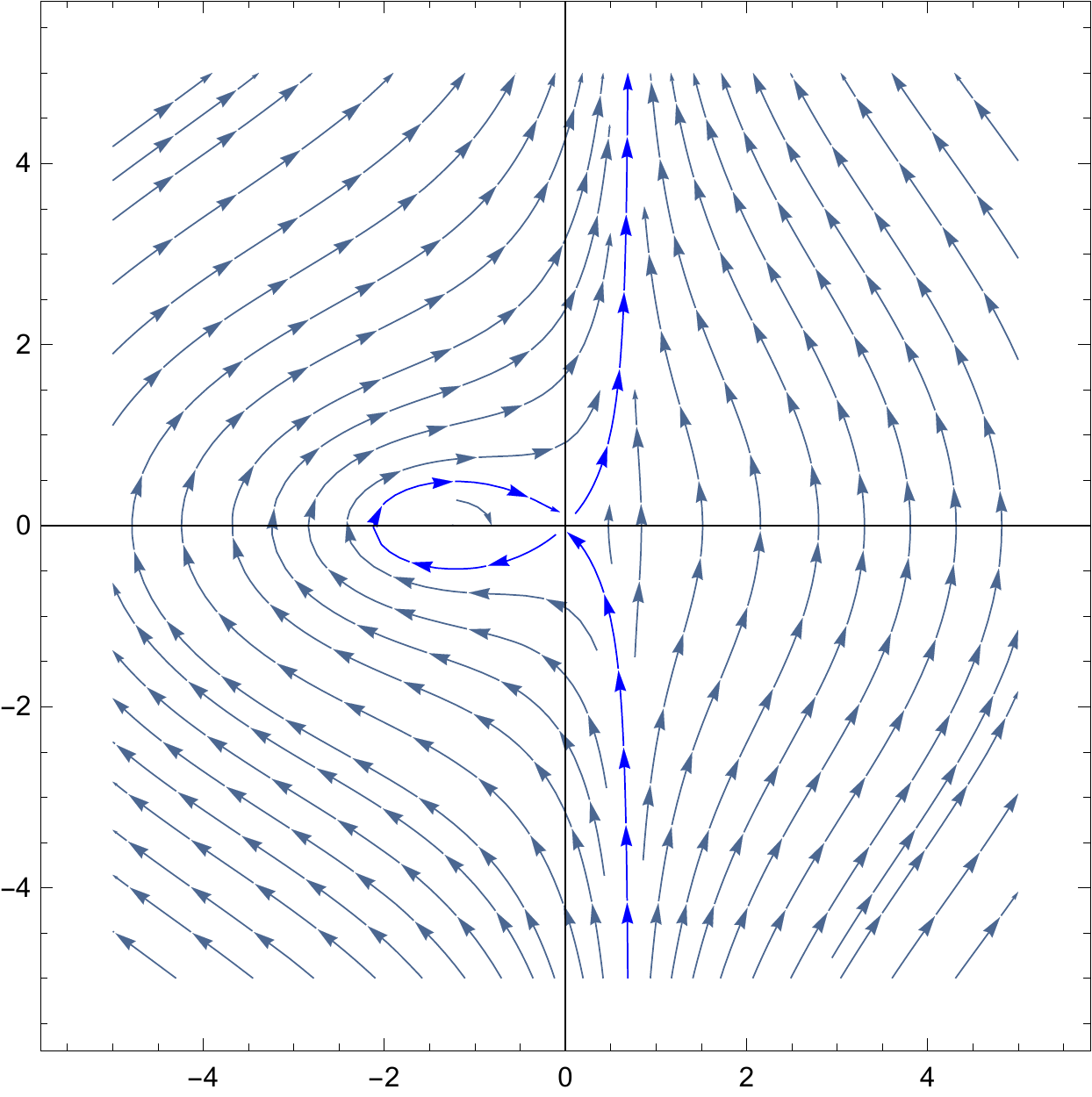}
            \caption{Vector field of system (\ref{4.7}).}         
   \label{Fig 6}
   \end{center}  
          \end{figure}
The homoclinic loop passing through the origin contains a center. It is an intersection of stable manifold
$$S=\{(x,y)\in\mathbb{R}^2:\frac{x^2}{6}-\frac{x y}{3}+\frac{y^2}{6}+\frac{x}{\sqrt{2}}+\frac{y}{\sqrt{2}}=0\}$$ and unstable manifold $$U=\{(x,y)\in\mathbb{R}^2:\frac{x^2}{6}+\frac{x y}{3}+\frac{y^2}{6}+\frac{x}{\sqrt{2}}-\frac{y}{\sqrt{2}}=0\}$$ of system (\ref{4.7}).

\subsection{Some other invariant curves}
\begin{The}
The cubic curve $y=x^3+mx^2+ux$\, is invariant under planar quadratic system (\ref{4.3.9}) if and only if 
\begin{equation*}
a_4=a_2=b_4=a_5=0,\,\, b_5=3a_3,
\end{equation*}
\begin{equation*}
6a_1^3+2a_3^2b_1-11a_1^2b_2-b_2^3-a_3b_2b_3+a_1(6b_2^2+a_3b_3)=0,
\end{equation*}
$a_3\ne0$ and $a_1\ne b_2$.\\
In this case, 
\begin{equation*}
m=\frac{3a_1-b_2}{a_3}\quad \mathrm{and}\quad u=\frac{b_1}{a_1-b_2}.
\end{equation*}
\end{The}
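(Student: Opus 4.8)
The plan is to apply the tangency condition from the Remark to the curve $h(x)=x^{3}+mx^{2}+ux$. Writing $h'(x)=3x^{2}+2mx+u$, invariance is equivalent to the polynomial identity $h'(x)\,f(x,h(x))=g(x,h(x))$ holding for all $x\in\mathbb{R}$, where $f,g$ are the right-hand sides of (\ref{4.3.9}). Since the curve passes through the origin, which is an equilibrium, the constant terms of both sides vanish automatically, so it suffices to equate coefficients of $x^{k}$ for $k\ge 1$. First I would substitute $y=h(x)$ into $f$ and $g$, obtaining two explicit polynomials in $x$ (of degree at most $6$, through the $a_{4}y^{2}$ and $b_{4}y^{2}$ terms), multiply the first by the quadratic $h'$, and collect.

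The governing observation is a degree count. The product $h'(x)f(x,h(x))$ can have degree as high as $8$, whereas $g(x,h(x))$ has degree at most $6$; hence the coefficients of $x^{8}$ and $x^{7}$ on the left must vanish. The $x^{8}$ coefficient is $3a_{4}$, which forces $a_{4}=0$. With $a_{4}=0$ the left-hand side drops to degree $\le 6$, and I would next read off the coefficients of $x^{6}$ and $x^{5}$: these involve only $a_{2},a_{5},b_{4},b_{5}$ and the parameter $m$, and couple as $b_{4}=3a_{5}$ and $3a_{2}=a_{5}m$. The branch singled out by the theorem is the one in which the remaining quadratic-nonlinearity coefficients vanish, i.e.\ $a_{2}=a_{5}=0$ (whence $b_{4}=0$), together with $b_{5}=3a_{3}$ from the next coefficient; establishing that these must hold is the delicate part (see below).

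Once the quadratic nonlinearities are eliminated, $f(x,h(x))$ reduces to $a_{1}x+a_{3}x^{2}$ and the identity becomes an equality of two degree-$4$ polynomials, whose coefficients decouple cleanly. The $x^{1}$ coefficient gives $(a_{1}-b_{2})u=b_{1}$, so $u=b_{1}/(a_{1}-b_{2})$ provided $a_{1}\ne b_{2}$; the $x^{3}$ coefficient gives $a_{3}m=3a_{1}-b_{2}$, so $m=(3a_{1}-b_{2})/a_{3}$ provided $a_{3}\ne 0$; and the $x^{4}$ coefficient reproduces $b_{5}=3a_{3}$. The single surviving equation, the coefficient of $x^{2}$, reads $(2a_{1}-b_{2})m-2a_{3}u-b_{3}=0$ and is a compatibility condition: substituting the expressions for $m$ and $u$ and clearing the denominator $a_{3}(a_{1}-b_{2})$ turns it into the displayed cubic relation among $a_{1},a_{3},b_{1},b_{2},b_{3}$. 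Running these implications in reverse verifies sufficiency, so the characterization is an equivalence on this branch.

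The hard part will be the necessity of $a_{2}=a_{5}=b_{4}=0$. The top-order matching forces only $a_{4}=0$ and leaves $b_{4}=3a_{5}$ and $3a_{2}=a_{5}m$ admissible, so to exclude an invariant cubic of this exact (monic, constant-free) form with $a_{5}\ne 0$ one must analyse the full coupled system formed by the $x^{1},\dots,x^{6}$ coefficients rather than the leading ones alone. I would expand each coefficient symbolically, substitute $b_{4}=3a_{5}$ and $a_{2}=a_{5}m/3$, and test whether the resulting equations are ever simultaneously solvable for $m,u$ with $a_{5}\ne 0$; if such a branch survives, the conditions should be read as sufficient, with extra cases appended. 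The remaining obstacle is purely computational: carrying out the expansion of $(2a_{1}-b_{2})(3a_{1}-b_{2})(a_{1}-b_{2})$ and the clearing of denominators accurately, so that the signs in the final cubic relation come out correctly.
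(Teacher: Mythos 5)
The paper states this theorem \emph{without proof}, so there is no argument of the authors' to compare against; your tangency-plus-coefficient-matching plan is exactly the method the paper uses for its other results (Theorems \ref{Thm 4.3.1}, \ref{Thm 4.3.3}). Your sufficiency branch is set up correctly: with $a_2=a_4=a_5=b_4=0$ the identity $h'(x)f(x,h(x))=g(x,h(x))$ reduces to four coefficient equations giving $b_5=3a_3$, $a_3m=3a_1-b_2$, $(a_1-b_2)u=b_1$ and the compatibility condition $(2a_1-b_2)m-2a_3u-b_3=0$. However, your caution about the signs is warranted and cuts deeper than you suspect: clearing denominators in the compatibility condition gives
\begin{equation*}
(3a_1-b_2)(2a_1-b_2)(a_1-b_2)-2a_3^2b_1-a_3b_3(a_1-b_2)=0,
\end{equation*}
i.e.
\begin{equation*}
6a_1^3-11a_1^2b_2+6a_1b_2^2-b_2^3-2a_3^2b_1-a_1a_3b_3+a_3b_2b_3=0,
\end{equation*}
which is \emph{not} the paper's displayed relation: the terms $a_3^2b_1$, $a_1a_3b_3$ and $a_3b_2b_3$ appear there with opposite signs. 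Both versions happen to vanish for the paper's own system (\ref{4.8}), so that example does not detect the discrepancy, but the system $\dot x = x^2$, $\dot y = x+y+3x^2+3xy$ has the invariant cubic $y=x^3-x^2-x$ (both sides of the tangency identity equal $3x^4-2x^3-x^2$) and satisfies the relation above while the paper's cubic evaluates to $-2\ne 0$ on it. So your derivation, carried out accurately, yields a \emph{correction} of the stated condition, not a proof of it.

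The gap you flag --- necessity of $a_2=a_5=b_4=0$ --- is genuine, and when the analysis you propose is carried out it resolves \emph{against} the theorem: the branch with $a_5\ne0$ is nonempty. Following your own reduction ($a_4=0$, $b_4=3a_5$, $3a_2=a_5m$ from the top coefficients), the remaining equations for the coefficients of $x^4,\dots,x$ admit solutions with $a_5\ne0$. Concretely, for the quadratic system
\begin{equation*}
\dot x = x+y+x^2+3xy,\qquad \dot y = -x+3y-2x^2+9y^2-xy,
\end{equation*}
which has $a_2=1$, $a_5=3$, $b_4=9$ all nonzero, one checks that with $h(x)=x^3+x^2+x$ one gets $f(x,h(x))=3x^4+4x^3+5x^2+2x$ and
\begin{equation*}
h'(x)\,f(x,h(x))=9x^6+18x^5+26x^4+20x^3+9x^2+2x=g(x,h(x)),
\end{equation*}
so $y=x^3+x^2+x$ is invariant although every one of the theorem's vanishing conditions fails. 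Hence the ``only if'' direction of the statement is false, and the correct conclusion of your program is precisely the hedge in your last paragraph: the conditions (with the corrected signs above) are \emph{sufficient}, and any complete characterization must append the $a_5\ne0$ branch; the equivalence as stated cannot be proved.
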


\begin{Ex}
The planar quadratic system
\begin{equation}
\begin{split}
\dot{x}&=x+x^2\\
\dot{y}&=x+2y+2x^2+3xy,\label{4.8} 
\end{split}
\end{equation}
has invariant curve viz. 
$y=x^3+x^2-x$ (see Figure \ref{Fig 7}).
\end{Ex}
\begin{figure}[h]
   \begin{center}
             \includegraphics[width=0.45\textwidth]{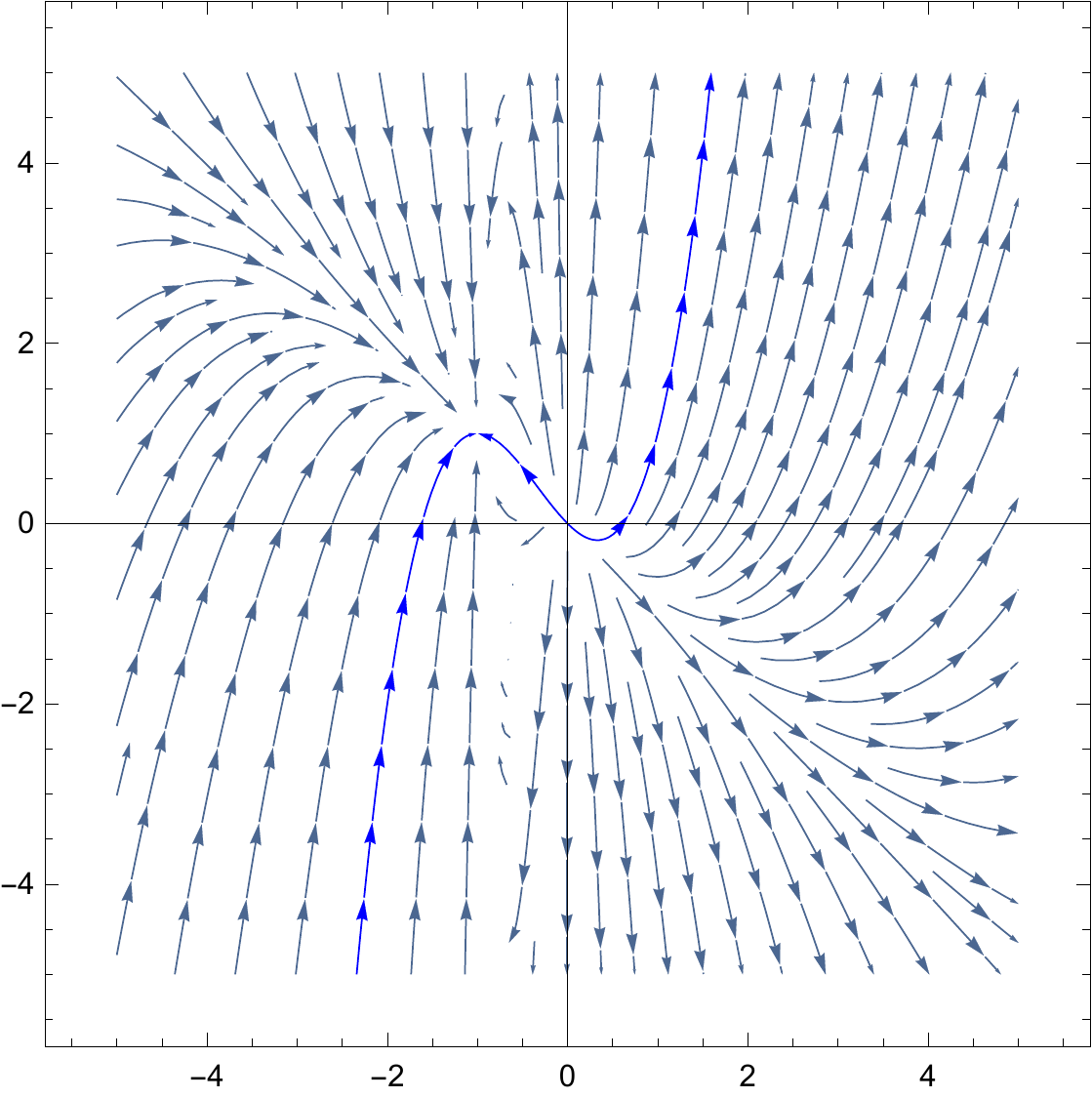}
            \caption{Vector field of system (\ref{4.8}).}         
   \label{Fig 7}
   \end{center}  
          \end{figure}

\begin{The}
The curves $y=mx^k$, (for any $m\in\mathbb{R}$ and $k>0$) are invariant under the flow of planar quadratic system (\ref{4.3.9}) if and only if   
\begin{equation*}
b_1=b_3=a_4=a_2=0,\,\, b_4=ka_5,
\end{equation*}
\begin{equation*}
b_5=ka_3\quad \mathrm{and}\quad b_2=ka_1.
\end{equation*}
\end{The}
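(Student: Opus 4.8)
The plan is to apply the tangency condition for the curve $y=h(x)=mx^k$, treating $k>0$ as a fixed exponent and demanding invariance for every $m\in\mathbb{R}$, and then to read off the stated coefficient relations by comparing powers of $m$ and $x$. First I would compute $\mathrm{D}h(x)=mk\,x^{k-1}$ and substitute $y=mx^k$ into the right-hand sides $f(x,y)=a_1x+a_2y+a_3x^2+a_4y^2+a_5xy$ and $g(x,y)=b_1x+b_2y+b_3x^2+b_4y^2+b_5xy$ of system (\ref{4.3.9}). Using $f(x,mx^k)=a_1x+a_2mx^k+a_3x^2+a_4m^2x^{2k}+a_5mx^{k+1}$ and the analogous expression for $g$, the tangency condition $\mathrm{D}h(x)\,f(x,h(x))=g(x,h(x))$ becomes, after expansion, the single identity
\[
mk\,a_1x^k+mk\,a_3x^{k+1}+m^2k\,a_2x^{2k-1}+m^2k\,a_5x^{2k}+m^3k\,a_4x^{3k-1}=b_1x+b_2mx^k+b_3x^2+b_4m^2x^{2k}+b_5mx^{k+1},
\]
which must hold for all $x$ in the domain of the curve, say $x>0$, where $x^k$ is defined for arbitrary real $k>0$.

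The key step is the order in which the coefficients are matched. Since invariance is required for every $m\in\mathbb{R}$, I would regard the identity as a polynomial identity in $m$ whose coefficients are functions of $x$, and equate the coefficients of $m^0,m^1,m^2,m^3$ separately. The useful structural feature is that within each fixed power of $m$ only \emph{distinct} powers of $x$ occur: the $m^0$ part involves $x$ and $x^2$; the $m^1$ part involves $x^k$ and $x^{k+1}$; the $m^2$ part involves $x^{2k-1}$ and $x^{2k}$; and the $m^3$ part involves only $x^{3k-1}$. Because distinct real powers of $x$ are linearly independent on $(0,\infty)$, each of these coefficients must vanish separately. This yields $b_1=b_3=0$ from $m^0$; $b_2=ka_1$ and $b_5=ka_3$ from $m^1$; $a_2=0$ and $b_4=ka_5$ from $m^2$ (here the hypothesis $k>0$ is used to pass from $ka_2=0$ to $a_2=0$); and $a_4=0$ from $m^3$. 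Collecting these gives exactly the asserted conditions $b_1=b_3=a_4=a_2=0$, $b_4=ka_5$, $b_5=ka_3$, $b_2=ka_1$. Since every step is an equivalence, reversing the argument shows the conditions are also sufficient, completing the ``if and only if''.

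I do not expect a serious obstacle; the only point requiring care is the justification for equating coefficients. Grouping by powers of $m$ first is precisely what makes the argument clean: had I instead tried to match powers of $x$ directly, the exponents $1,2,k,k+1,2k-1,2k,3k-1$ could coincide for special $k$ (for instance $k=1$ forces several collisions, and $k=2$ makes $k+1=2k-1$), forcing a case analysis. Organizing the comparison by $m$ avoids this entirely, so the main ``work'' is merely the bookkeeping of the expansion, and the genuine uses of $k>0$ are to guarantee $k\neq0$ and to ensure the relevant powers of $x$ are distinct.
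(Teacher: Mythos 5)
Your proof is correct, and it follows exactly the methodology the paper uses for its neighboring results (Theorems \ref{Thm 4.3.1} and \ref{Thm 4.3.3}): apply the tangency condition, substitute $y=mx^k$, and equate coefficients --- indeed the paper states this particular theorem without writing out a proof, and yours fills that gap faithfully. Your choice to group by powers of $m$ \emph{first}, exploiting that invariance is demanded for every $m\in\mathbb{R}$, is a genuinely careful touch: it sidesteps the exponent collisions (e.g.\ $k=1$ or $k=2$ make powers of $x$ coincide) that would otherwise force a case analysis, and it is precisely why the stated conditions hold uniformly for all $k>0$.
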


The system (\ref{4.3.9}) can have invariant curves other than polynomial curves also. The following theorem provides conditions for the existence of exponential curve as an invariant.

\begin{The}
The system (\ref{4.3.9}) has exponential curve $y=m e^x$ for all $m\in\mathbb{R}$, as an invariant curve if
\begin{equation*}
a_3=a_4=a_5=b_1=b_2=b_3=0,
\end{equation*}
\begin{equation*}
b_4=a_2\quad \mathrm{and}\quad b_5=a_1.
\end{equation*}
\end{The}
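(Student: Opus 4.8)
The plan is to invoke the tangency condition recorded in the Remark following (\ref{4.2.16}): the curve $y=h(x)$ is invariant under (\ref{4.3.9}) if and only if $h'(x)\,\dot{x}=\dot{y}$ holds identically along the curve, that is $h'(x)\,f(x,h(x))=g(x,h(x))$, where $f$ and $g$ denote the two right-hand sides of (\ref{4.3.9}). For the exponential family $h(x)=me^{x}$ there is a convenient simplification: on the curve itself $h'(x)=me^{x}=y$, so the tangency requirement collapses to the single relation $y\,\dot{x}=\dot{y}$.

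First I would substitute the quadratic right-hand sides into $y\,\dot{x}-\dot{y}$ and expand, obtaining
\begin{equation*}
a_1 xy+a_2 y^2+a_3 x^2 y+a_4 y^3+a_5 x y^2-b_1 x-b_2 y-b_3 x^2-b_4 y^2-b_5 xy.
\end{equation*}
Since we want the whole family $y=me^{x}$ to be invariant, this expression must vanish for every $x\in\mathbb{R}$ \emph{and} every $m\in\mathbb{R}$. Replacing $y$ by $me^{x}$ turns it into a finite linear combination of the functions $x^{j}e^{kx}$ with $j\in\{0,1,2\}$ and $k\in\{0,1,2,3\}$, where the coefficient attached to each such function is a single monomial in $m$ multiplied by the structure constants $a_i,b_i$.

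The key step is to use that the functions $x^{j}e^{kx}$ attached to distinct pairs $(j,k)$ are linearly independent over $\mathbb{R}$, and that each carries a single power of $m$; demanding the identity for all $m$ then forces every coefficient to vanish on its own. Reading them off, the coefficient of $e^{3x}$ gives $a_4=0$, that of $xe^{2x}$ gives $a_5=0$, that of $x^2e^{x}$ gives $a_3=0$, the coefficients of $x$, $x^2$ and $e^{x}$ give $b_1=b_3=b_2=0$, and matching the coefficients of $e^{2x}$ and of $xe^{x}$ yields exactly $b_4=a_2$ and $b_5=a_1$. This recovers precisely the stated conditions.

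For the \emph{if} direction, which is all the statement asserts, I would simply substitute $a_3=a_4=a_5=b_1=b_2=b_3=0$, $b_4=a_2$, $b_5=a_1$ back into $y\,\dot{x}-\dot{y}$; the surviving terms cancel in pairs, so the tangency identity holds for all $m$ and the entire family $y=me^{x}$ is invariant. The only point requiring a word of care is this coefficient-matching argument via linear independence of $\{x^{j}e^{kx}\}$, which incidentally shows the conditions to be necessary as well as sufficient; the underlying algebra is otherwise routine, so I do not expect a genuine obstacle here.
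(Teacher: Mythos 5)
Your proposal is correct and follows exactly the method the paper itself relies on: the tangency condition $h'(x)\,f(x,h(x))=g(x,h(x))$ followed by coefficient matching, here justified by linear independence of the functions $x^{j}e^{kx}$ (the paper states this particular theorem without an explicit proof, but this is precisely the argument it uses for the analogous line, parabola and cubic-curve results, and your observation that $h'(x)=y$ on the curve cleanly reduces the condition to $y\,\dot{x}=\dot{y}$). As a bonus your coefficient analysis also establishes necessity, which is stronger than the stated ``if'' direction.
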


\begin{Ex}
Consider, the planar quadratic system
\begin{equation}
\begin{split}
\dot{x}&=-2x+3y\\
\dot{y}&=3y^2-2xy. \label{4.9} 
\end{split}
\end{equation}
It can be verified that the curve $y=m e^x$ (for all $m\in\mathbb{R}$) are invariant under the flow of system (\ref{4.9}). (see Figure \ref{Fig 8}).
\end{Ex}
\begin{figure}[h]
   \begin{center}
             \includegraphics[width=0.45\textwidth]{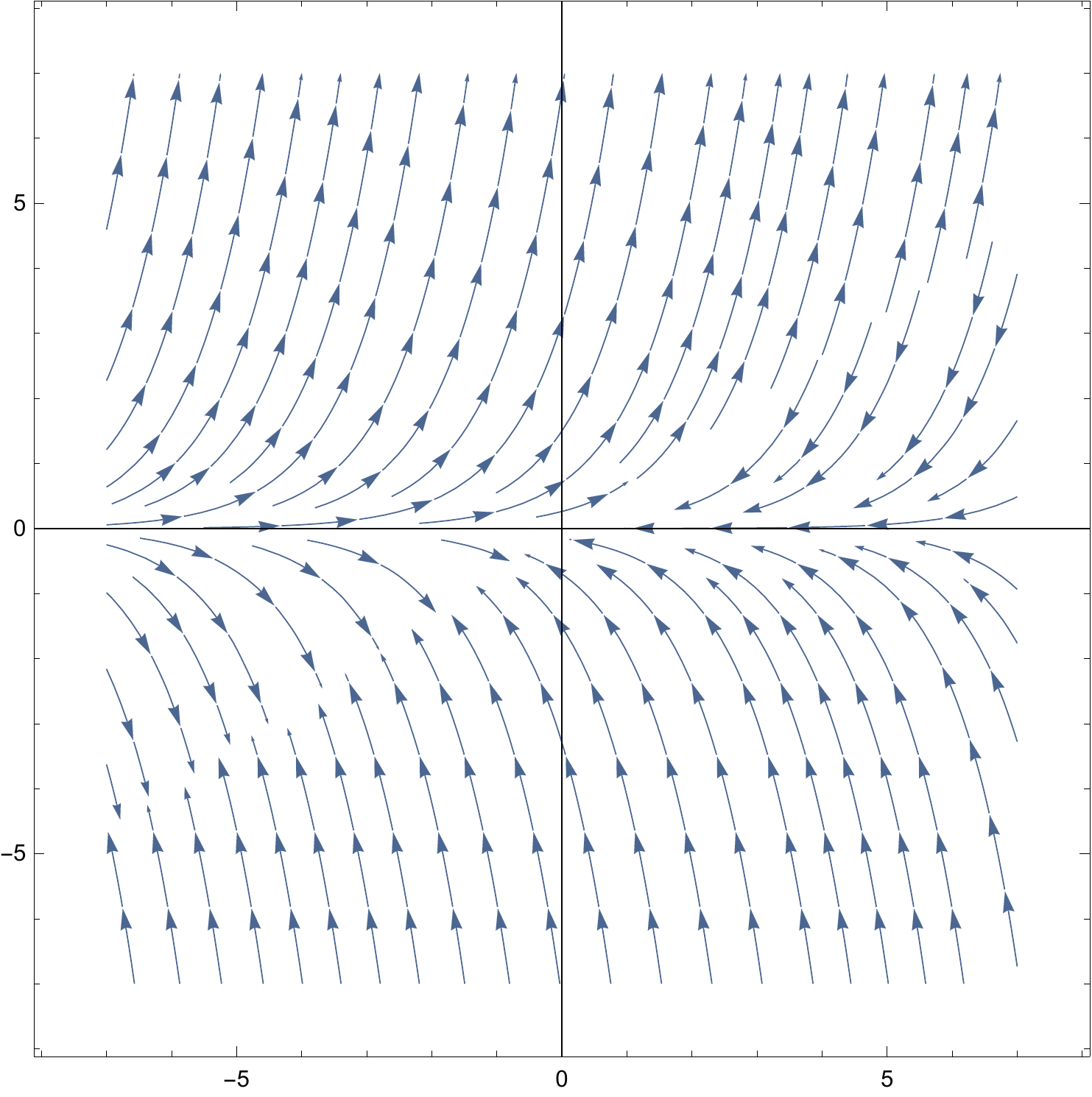}
            \caption{Vector field of system (\ref{4.9}).}         
   \label{Fig 8}
   \end{center}  
          \end{figure}

{\bf Note:}\\
Note that, the system (\ref{4.3.9}) cannot have $y=m \sin x$ as an invariant curve. In this case, the tangency condition implies that,
\begin{equation}
\begin{split}
\,& ma_1x\cos x+m^2a_2\sin x\cos x+ma_3x^2\cos x+m^3a_4\sin^2x\cos x+m^2a_5x\sin x\cos x\\ &-b_1x-b_2m\sin x-b_3x^2-b_4m^2\sin^2x-b_5mx\sin x=0, \quad \forall x\in \mathbb{R}. \label{4.3.34}
\end{split}
\end{equation}
This equation is not helpful in finding the values of $m$.\\
e.g. $x=\frac{\pi}{2}$ produces $m=\frac{-2b_2-b_5\pi\pm2\sqrt{(b_2+b_5\frac{\pi}{4})^2-4b_4(b_1\frac{\pi}{2}+b_3\frac{\pi^2}{4})}}{4b_4}$ depending on $b_i's$ only. However $x=\pi$ produces $m=\frac{-b_1\pi-b_3\pi^2}{a_1\pi+a_3\pi^2}$ depending on $a_i's$ as well as $b_i's$. Therefore we cannot find unique $m\ne0$ satisfying (\ref{4.3.34}).

\section{Fractional order systems} \label{Sec4}
The fractional order systems are generalizations of classical systems. In this section, we show that these systems cannot have invariant manifolds other than the linear subspaces of $\mathbb{R}^n$.
\subsection{Invariant subspaces of fractional order systems}
\begin{The}
The conditions for the existence of invariant linear subspaces for the fractional order systems are same as their classical counterparts.
\end{The}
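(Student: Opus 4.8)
The plan is to reduce the statement to the tangency condition of the Remark and to exploit the single structural fact that separates the linear from the nonlinear case: the Caputo operator ${}_0^C\mathrm{D}_t^\alpha$ is \emph{linear}. First I would represent the subspace as a graph. After a linear change of coordinates, write $X=(x,y)$ with $x\in\mathbb{R}^k$, $y\in\mathbb{R}^{n-k}$, so that the subspace is $S=\{(x,y): y=Mx\}$ for a constant matrix $M$, and split the vector field accordingly as ${}_0^C\mathrm{D}_t^\alpha x=f(x,y)$, ${}_0^C\mathrm{D}_t^\alpha y=g(x,y)$. The claim is then that $S$ is invariant for the fractional system if and only if $Mf(x,Mx)=g(x,Mx)$ for all $x$, which is precisely the classical tangency condition of the Remark with $h(x)=Mx$, $\mathrm{D}h(x)=M$. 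This reduction already shows why the conditions must coincide with the integer-order case: the defining relation is purely algebraic in the vector field and $M$.

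For necessity, suppose $S$ is invariant and pick any $X_0=(x_0,Mx_0)\in S$. Then the solution satisfies $y(t)=Mx(t)$ throughout its interval of existence. Applying ${}_0^C\mathrm{D}_t^\alpha$ to both sides and using linearity gives ${}_0^C\mathrm{D}_t^\alpha y(t)=M\,{}_0^C\mathrm{D}_t^\alpha x(t)$; substituting the dynamics yields the pointwise-in-$t$ identity $g(x(t),Mx(t))=Mf(x(t),Mx(t))$ along the trajectory. Evaluating at $t=0$ gives $g(x_0,Mx_0)=Mf(x_0,Mx_0)$, and since $x_0$ was an arbitrary point of $S$, this is the tangency condition on all of $S$. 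The crucial point, worth stating explicitly because it foreshadows the nonexistence result of the next section, is that this step uses only linearity of the operator: for a genuinely nonlinear $h$ there is no chain rule ${}_0^C\mathrm{D}_t^\alpha h(x(t))=\mathrm{D}h(x)\,{}_0^C\mathrm{D}_t^\alpha x(t)$, and the argument collapses.

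For sufficiency, assume $Mf(x,Mx)=g(x,Mx)$ and set $z=y-Mx$. Linearity again gives ${}_0^C\mathrm{D}_t^\alpha z=g(x,Mx+z)-Mf(x,Mx+z)$, so the pair $(x,z)$ obeys a fractional system whose $z$-equation has right-hand side vanishing identically on $\{z=0\}$ by hypothesis. Hence $z\equiv0$, paired with $x$ solving the reduced equation ${}_0^C\mathrm{D}_t^\alpha x=f(x,Mx)$, is a solution through any initial datum on $S$ (where $z(0)=0$). Invoking uniqueness of solutions of the fractional initial value problem under the standing Lipschitz assumptions on $f$, this is the only solution through $(x_0,0)$, so the trajectory remains on $S$ and $S$ is invariant.

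The main obstacle I anticipate is not a hard estimate but the bookkeeping at the two ends of the argument: justifying that the relation obtained along trajectories in the necessity part may be read off at the initial point (handled by continuity in $t$ of the vector-field values and varying $x_0$), and invoking a clean uniqueness theorem for the fractional IVP in the converse. I would therefore state the regularity hypotheses on $f$ at the outset, so that both existence-uniqueness and the termwise application of the linear operator are unambiguous, and I would emphasize that the whole proof rests on ${}_0^C\mathrm{D}_t^\alpha$ being linear, which is exactly why only \emph{linear} subspaces persist as invariant manifolds of fractional systems.
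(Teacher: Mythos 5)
Your proposal is correct, and its engine is the same as the paper's: linearity of the Caputo operator is what makes the invariance condition for a linear subspace coincide with the classical tangency condition. The difference lies in how much is actually proved. The paper writes the subspace as a level set $\sum_{i=1}^{n} a_i x_i=c$, asserts that invariance under the fractional (resp.\ classical) flow amounts to $\sum_i a_i\,{}_0^C\mathrm{D}_t^\alpha x_i=0$ (resp.\ $\sum_i a_i\dot{x}_i=0$), and notes that both reduce to the single algebraic identity $\sum_i a_i f_i(x_1,\dots,x_n)=0$; neither direction of the equivalence between that identity and invariance is argued. You prove both directions: necessity by applying the linear operator ${}_0^C\mathrm{D}_t^\alpha$ along a trajectory confined to the subspace and passing to $t=0$ by continuity, and sufficiency by the substitution $z=y-Mx$, solving the reduced equation ${}_0^C\mathrm{D}_t^\alpha x=f(x,Mx)$, and invoking uniqueness for the fractional initial value problem. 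That sufficiency step is the genuine addition: the paper implicitly takes for granted that the tangency condition characterizes invariance in the fractional setting, whereas you establish it. The choice of graph form $y=Mx$ versus the paper's level-set form is immaterial, since constant linear (or affine) changes of coordinates commute with ${}_0^C\mathrm{D}_t^\alpha$ by linearity and the vanishing of the Caputo derivative on constants. Your closing observation that the argument collapses for nonlinear $h$, because no chain rule ${}_0^C\mathrm{D}_t^\alpha h(x(t))=\mathrm{D}h(x)\,{}_0^C\mathrm{D}_t^\alpha x(t)$ exists, is precisely the mechanism the paper exploits immediately afterward, where the generalized Leibniz rule is used to rule out invariant parabolas for the fractional quadratic system.
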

\begin{proof}
Consider the fractional order system
\begin{equation}
{}_0^C\mathrm{D}_t^\alpha x_i=f_i(x_1,x_2,\dots,x_n), \quad 1\le i \le n, \quad 0<\alpha<1 \label{4.38}
\end{equation}
and its classical counterpart 
\begin{equation}
\dot{x_i}=f_i(x_1,x_2,\dots,x_n), \quad 1\le i \le n. \label{4.39}
\end{equation}
The tangency condition shows that the linear subspace 
\begin{equation}
S=\{(x_1,x_2,\dots,x_n)\in\mathbb{R}^n:\sum_{i=1}^{n}a_ix_i=c, \, a_i\in\mathbb{R}\}
\end{equation}
of $\mathbb{R}^n$ is invariant under system (\ref{4.38}) if 
\begin{equation}
\sum_{i=1}^{n}a_i \,{}_0^C\mathrm{D}_t^\alpha x_i=0
\label{4.41}
\end{equation}
and invariant under system (\ref{4.39}) if 
\begin{equation}
\sum_{i=1}^{n}a_i \,\dot{x_i}=0.
\label{4.42}
\end{equation}
$\therefore$ Both the conditions (\ref{4.41}) and (\ref{4.42}) get reduced to
\begin{equation}
\sum_{i=1}^{n}a_i \,f_i(x_1,x_2,\dots,x_n)=0.
\label{4.43}
\end{equation}
This proves theorem.
\end{proof}

This shows that the Theorems \ref{Thm 4.3.1} and \ref{Thm 4.3.2} 
 of classical system hold for fractional order system (\ref{4.38}) also.

\begin{Ex}
Consider a fractional order planar quadratic system 
\begin{equation}
\begin{split}
{}_0^C\mathrm{D}_t^\alpha \,x&=x-y+2x^2-xy\\
{}_0^C\mathrm{D}_t^\alpha\, y&=-9x+y-y^2+2xy. \label{4.4.*}
\end{split}
\end{equation}

This system satisfies the condition 1(b) of  Theorem \ref{Thm 4.3.2}.
\end{Ex}
Here, $y=3x$,\, and $y=-3x$ are invariant lines. In Figure \ref{Fig 9}, we have sketched solutions of (\ref{4.4.*}) using NPCM \cite{V. Gejji 3} for $\alpha=0.75$ with various initial conditions (shown in different colors) on these invariant lines. It can be checked that, all these solutions follow the same trajectory on the respective straight line.

\begin{figure}[h]
                 \subfloat[The line y=3x is invariant]{\includegraphics[width=0.46\textwidth]{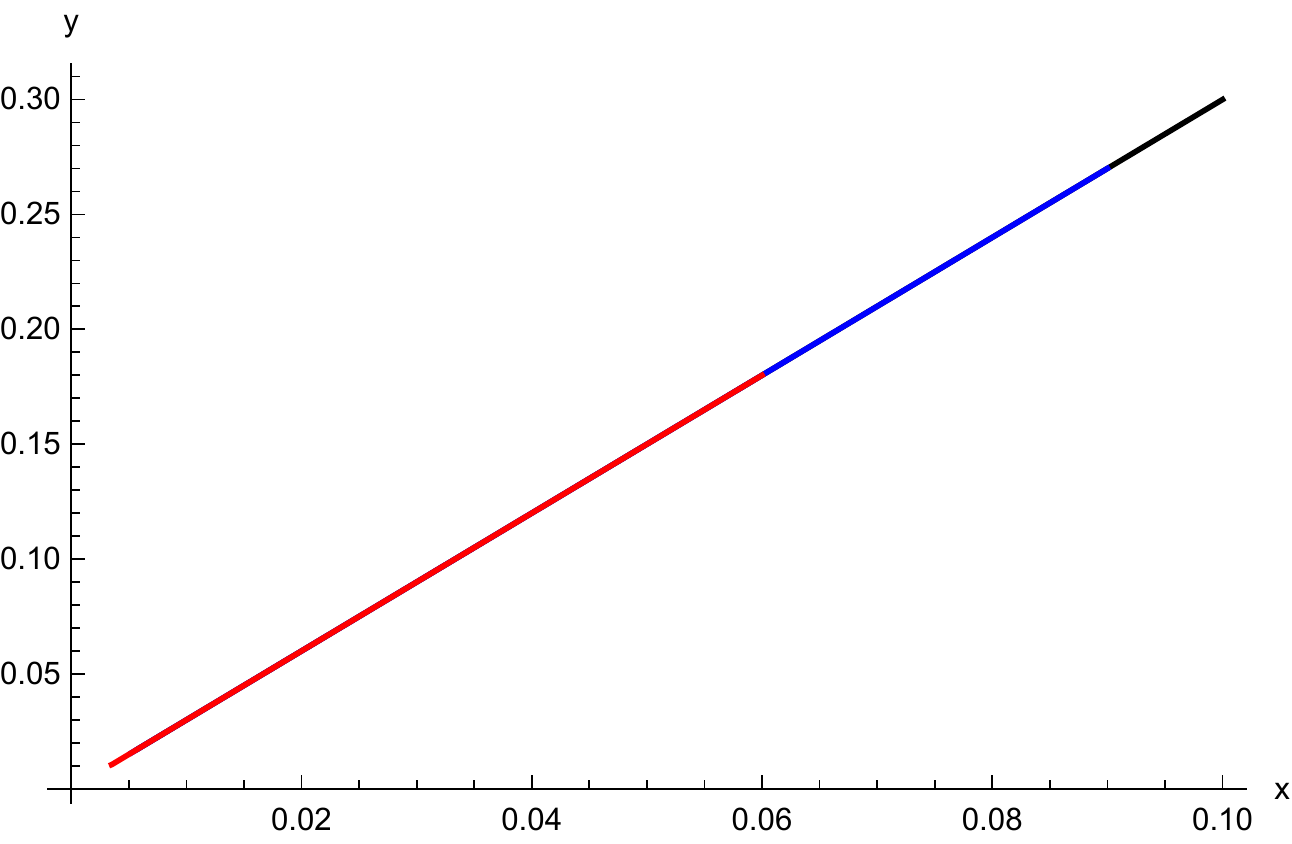}}
                 \hfill 
                 \subfloat[The line y=-3x is invariant]{\includegraphics[width=0.46\textwidth]{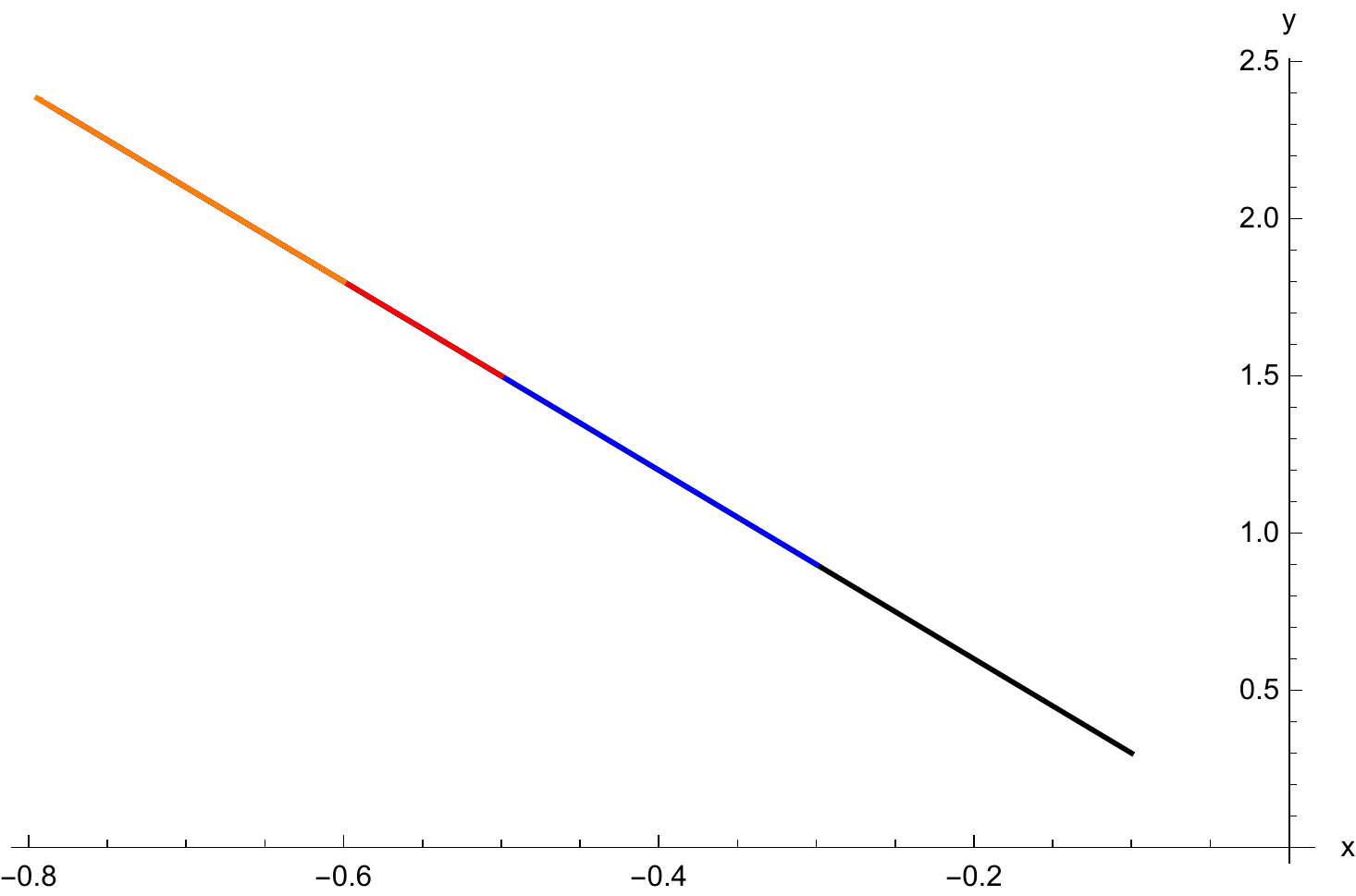}}
                 \caption{Invariant lines of system (\ref{4.4.*})
                 } \label{Fig 9}
               \end{figure} 

\subsection{Nonexistence of the invariant curves, with curvature $>0$ for fractional order systems}
\begin{The}\label{Thm 4.4.1}
The solution curves of linear FDEs 
\begin{equation}
{}_0^C\mathrm{D}_0^{\alpha}X(t)= AX(t),\,\,(0<\alpha<1) \label{4.46}
\end{equation}
for which the initial point is not on an eigenvector of $n\times n$ matrix $A$, are not invariant under $\Phi_t=E_\alpha(A t^\alpha)$.
\end{The}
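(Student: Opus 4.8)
The plan is to exploit the failure of the semigroup property for the fractional flow $\Phi_t=E_\alpha(At^\alpha)$, turning it into a quantitative statement about the initial velocity of a restarted trajectory. Writing the solution of (\ref{4.46}) as $X(t)=E_\alpha(At^\alpha)X_0=\Phi_t(X_0)$, the solution curve is $\gamma=\{\Phi_t(X_0):t\ge0\}$. Suppose, for contradiction, that $\gamma$ is invariant. Then for every point $P=\Phi_{t_0}(X_0)\in\gamma$ with $t_0>0$, the restarted solution $s\mapsto\Phi_s(P)=E_\alpha(As^\alpha)P$ (which solves (\ref{4.46}) with initial value $P$) must again lie in $\gamma$. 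In the classical case $\alpha=1$ this is automatic because $\Phi_s\circ\Phi_t=\Phi_{s+t}$; the whole point is that this identity fails for $0<\alpha<1$.

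First I would compute the two relevant tangent directions. Differentiating the series for $E_\alpha(At^\alpha)X_0$ term by term gives $\dot X(t)=t^{\alpha-1}A\,E_{\alpha,\alpha}(At^\alpha)X_0$, so the tangent to $\gamma$ at $P$ points along $A\,E_{\alpha,\alpha}(At_0^\alpha)X_0$. On the other hand, expanding the restarted solution,
$$\Phi_s(P)=P+\frac{s^\alpha}{\Gamma(\alpha+1)}\,AP+O(s^{2\alpha}),$$
so $\Phi_s(P)-P$ points, to leading order, along $AP=A\,E_\alpha(At_0^\alpha)X_0$, with the characteristic infinite initial speed ($\sim s^{\alpha-1}$) of fractional dynamics. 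For small $t_0$ the arc $\gamma$ near $P$ is embedded, so if the restarted trajectory is to remain on $\gamma$ its initial direction must be tangent to $\gamma$ at $P$; this forces the direction condition $A\,E_\alpha(At_0^\alpha)X_0\parallel A\,E_{\alpha,\alpha}(At_0^\alpha)X_0$.

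The core of the argument is to show this parallelism must fail for some $t_0>0$ whenever $X_0$ is not an eigenvector. Diagonalizing $A$ and writing $X_0=\sum_i c_iv_i$ in an eigenbasis (eigenvalues $\lambda_i$), the two vectors become $\sum_i c_i\lambda_iE_\alpha(\lambda_it_0^\alpha)v_i$ and $\sum_i c_i\lambda_iE_{\alpha,\alpha}(\lambda_it_0^\alpha)v_i$; vanishing of all $2\times2$ minors shows their parallelism for all $t_0$ is equivalent to $g(\lambda_it_0^\alpha)=g(\lambda_jt_0^\alpha)$ for every pair of active modes ($c_i\lambda_i\ne0$), where $g=E_\alpha/E_{\alpha,\alpha}$. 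A short computation gives $g(0)=\Gamma(\alpha)$ and $g'(0)=\tfrac1\alpha-\Gamma(\alpha)^2/\Gamma(2\alpha)\ne0$ for $0<\alpha<1$, so $g$ is locally injective at $0$; hence $g(\lambda_i\tau)=g(\lambda_j\tau)$ for small $\tau$ forces $\lambda_i=\lambda_j$. Thus if two distinct nonzero eigenvalues carry nonzero coefficients, the parallelism fails for small $t_0$, the restarted trajectory from $P=X(t_0)$ immediately leaves $\gamma$, and invariance is impossible; parallelism can persist only when all active nonzero eigenvalues coincide, i.e. when $X_0$ is an eigenvector.

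I expect the main obstacle to be the borderline configurations in which $A$ has a zero eigenvalue: a component of $X_0$ lying in $\ker A$ is annihilated by $A$ and therefore enters neither tangent direction, so the direction test alone cannot detect it. In that situation the solution curve degenerates to a straight segment of zero curvature and can in fact be invariant, so the statement is sharp precisely on the positive-curvature curves announced in the subsection title. Making this boundary explicit, and extending the diagonalization step to non-diagonalizable $A$ (via the Jordan form, or by a density argument in $A$), is the delicate part that the proof must address.
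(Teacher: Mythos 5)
Your core argument is correct, but it takes a genuinely different---and substantially more rigorous---route than the paper does. The paper's own proof restarts the flow at a point $Y_0=E_\alpha(At_*^\alpha)X_0$ of the solution curve, expands $\Phi_t(Y_0)=E_\alpha(At^\alpha)E_\alpha(At_*^\alpha)X_0$ as a double Cauchy-product series, and then simply \emph{asserts} that this ``cannot be written as $E_\alpha(As^\alpha)X_0$ for any $s>0$''; the failure of the semigroup property is the same engine you use, but the paper's crucial non-representability claim is never actually proved. Your proposal supplies precisely the missing step, in localized form: invariance forces the departure direction $AE_\alpha(At_0^\alpha)X_0$ of the restarted trajectory to be parallel to the tangent direction $AE_{\alpha,\alpha}(At_0^\alpha)X_0$ of the curve, and you defeat this parallelism by local injectivity of $g=E_\alpha/E_{\alpha,\alpha}$ at $0$; your value $g'(0)=\tfrac{1}{\alpha}-\Gamma(\alpha)^2/\Gamma(2\alpha)$ is correct and is indeed nonzero for $0<\alpha<1$, since $\Gamma(\alpha)^2/\Gamma(2\alpha)=B(\alpha,\alpha)\ge 2^{1-\alpha}/\alpha>1/\alpha$. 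What your route buys is a genuine proof for diagonalizable $A$ whenever two distinct nonzero eigenvalues are active, together with a sharp observation the paper misses: the theorem as literally stated is \emph{false} for singular $A$, because with $X_0=u+w$, $0\ne u\in\ker A$, $Aw=\lambda w$, $\lambda\ne 0$ real, the point $X_0$ is not an eigenvector, yet the solution curve $u+E_\alpha(\lambda t^\alpha)w$ is a straight ray and is invariant (by monotonicity of $E_\alpha$ on the real line and the intermediate value theorem); this example also falsifies the paper's non-representability assertion in that case, and it confirms that the correct scope is the positive-curvature one announced in the subsection title and in Theorem \ref{Thm 4.4.2}. What the paper's route buys is only brevity and formal coverage of arbitrary $A$. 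The one genuine gap you leave---and honestly flag---is the non-diagonalizable case; note that a density argument in $A$ cannot close it, since invariance of a fixed curve is not stable under perturbation of the system, so one must instead rerun your tangency computation on Jordan blocks using $E_\alpha(At^\alpha)=\sum_{k}\frac{t^{\alpha k}E_\alpha^{(k)}(\lambda t^\alpha)}{k!}N^k$. Even with that case deferred, your argument establishes more, with proof, than the paper's own treatment of this theorem.
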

\begin{proof}
The general solution of the initial value problem
\begin{equation}
{}_0^C\mathrm{D}_0^{\alpha}X(t)= AX(t),\,\,X(0)=X_0 \label{4.4.45}
\end{equation}
is 
\begin{equation}
X(t)=E_\alpha(A t^\alpha)X_0. \label{4.4.46}
\end{equation}
If $X_0$ is on an eigenvector of $A$, then $AX_0=\lambda X_0$, where $\lambda$ is the  corresponding eigenvalue.
\begin{equation}
\Rightarrow  X(t)=E_\alpha(\lambda t^\alpha)X_0. \label{4.4.47}
\end{equation}
This is on the same eigenvector, because $E_\alpha(\lambda t^\alpha)$ is a number for any $t>0$.\\
$\therefore$ The solution trajectory of (\ref{4.4.45}) starting on eigenvector is a straight line and is invariant under $\Phi_t$.
\par Now, assume that $X_0$ is not on any eigenvector of $A$.\\
$\therefore  AX_0$ is not on a vector $X_0$.\, 
In this case, if $Y_0=E_\alpha(A t_*^\alpha)X_0$, $t_*>0$ is any point on the solution curve (\ref{4.4.46}), then 
\begin{equation*}
\begin{split}
\phi_t(Y_0)&=E_\alpha(A t^\alpha)E_\alpha(A t_*^\alpha)X_0\\
&= \sum_{k=0}^{\infty} \sum_{l=0}^{k}\frac{A^k t^{\alpha k}t_*^{\alpha k-\alpha l}}{\Gamma(\alpha l+1)\Gamma(\alpha k-\alpha l+1)}X_0.
\end{split}
\end{equation*}
This cannot be written as $E_\alpha(A s^\alpha)X_0$ for any $s>0$.
\par This proves the result.
\end{proof}

\begin{Ex}
Consider the planar fractional order system
\begin{equation}
{}_0^C\mathrm{D}_0^{0.7}X(t)=\begin{bmatrix}
1 & 3\\
-3 & 1
\end{bmatrix}
X(t), \quad X(0)=X_0. \label{4.44}
\end{equation}
Its solution with $X_0=[1, \,1]^T$ is given by,
\begin{equation}
X(t)=\begin{bmatrix}
Re[E_{0.7} ((1+3i)t^\alpha)] + Im[E_{0.7} ((1+3i)t^\alpha)]\\
-Im[E_{0.7} ((1+3i)t^\alpha)] + Re[E_{0.7} ((1+3i)t^\alpha)]
\end{bmatrix}\label{4.45}
.
\end{equation}
In the Figure \ref{Fig 10}, we sketch the solution trajectory (\ref{4.45})  (Blue color) of system (\ref{4.44}) and another solution trajectories  with initial conditions at various points  $X(t_1)$ on (\ref{4.45}). It can be checked that, the trajectories follow different paths.
\end{Ex}
\begin{figure}[h]
   \begin{center}
             \includegraphics[width=0.6\textwidth]{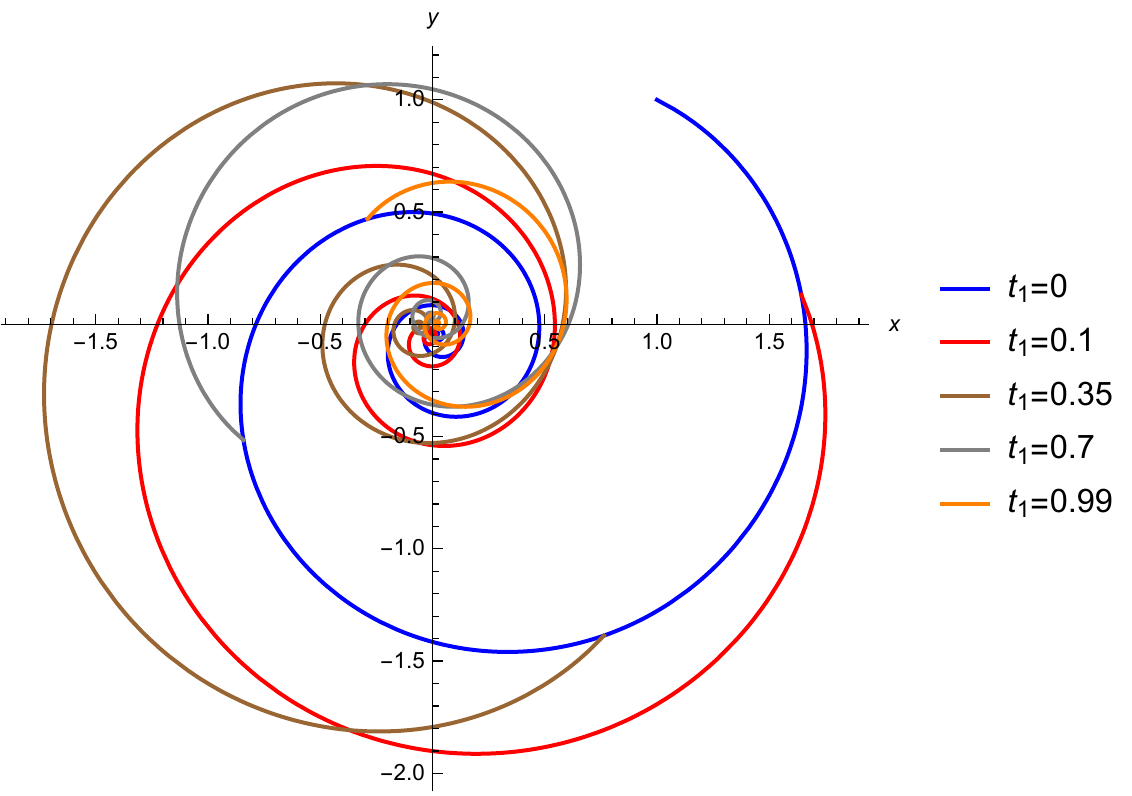}
            \caption{Solution curve  (\ref{4.45}) is not invariant.}         
   \label{Fig 10}
   \end{center}  
          \end{figure}

{\bf Note:}
\par It can be easily checked that, the tangency condition used for nonlinear case in classical sense will not provide any invariant curves for fractional order case.\\
e.g. as in Theorem  \ref{Thm 4.3.3}, consider fractional order system 
\begin{equation}
\begin{split}
{}_0^C\mathrm{D}_t^\alpha \,x&=a_1x+a_2y+a_3x^2+a_4y^2+a_5xy\\
{}_0^C\mathrm{D}_t^\alpha\, y&=b_1x+b_2y+b_3x^2+b_4y^2+b_5xy, 
\end{split} \label{4.4.1}
\end{equation}
 and a parabola $y=mx^2$.\\
Operating ${}_0^C\mathrm{D}_t^\alpha$ on both the sides, we get
\begin{equation}
{}_0^C\mathrm{D}_t^\alpha y=m\, {}_0^C\mathrm{D}_t^\alpha x^2 \label{4.4.3}
\end{equation}
Note that, unlike in classical case, ${}_0^C\mathrm{D}_t^\alpha x^2$ cannot be written in terms of ${}_0^C\mathrm{D}_t^\alpha x$. The generalized Leibniz rule \cite{Podlubny} gives
\begin{equation}
{}_0^C\mathrm{D}_t^\alpha x(t)^2=\left[{}_0^C\mathrm{D}_t^\alpha x(t)+\frac{t^{-\alpha}}{\Gamma(1-\alpha)}x(0)\right]x(t)
+\sum_{k=1}^{\infty}\binom{\alpha}{k}\left({}_0\mathrm{I}_t^{k-\alpha} x(t)\right)x^{(k)}(t)
-\frac{t^{-\alpha}}{\Gamma(1-\alpha)}x(0)^2.
\end{equation}
$\therefore$ (\ref{4.4.3}) becomes,
\begin{equation}
\begin{split}
& a_4m^3x^5+(a_5-b_4)m^2x^4+(a_2m+a_3-b_5)mx^3+(a_1m-b_2m-b_3)x^2+\left(\frac{m x(0)t^{-\alpha}}{\Gamma(1-\alpha)}-b_1\right)x\\
&+m\sum_{k=1}^{\infty}\binom{\alpha}{k}\left({}_0\mathrm{I}_t^{k-\alpha} x(t)\right)x^{(k)}(t)-\frac{m x(0)^2t^{-\alpha}}{\Gamma(1-\alpha)}=0\quad \forall t\in \mathbb{R}.
\end{split}\label{4.49}
\end{equation}
This does not provide any nonzero value of $m$, because of the term $m\sum_{k=1}^{\infty}\binom{\alpha}{k}\left({}_0\mathrm{I}_t^{k-\alpha} x(t)\right)x^{(k)}(t)$ involved in (\ref{4.49}).\\
$\Rightarrow \nexists$ any invariant parabola of the form $y=m x^2$ for fractional order system (\ref{4.4.1}). The similar computations can be used to show that $\nexists$ any invariant manifold (except linear subspace of $\mathbb{R}^n$) for fractional order system (\ref{4.4.1}).

\begin{The} \label{Thm 4.4.2}
Consider fractional order system
\begin{equation}
{}_0^C\mathrm{D}_t^\alpha X=f(X), \qquad 0<\alpha<1 \label{4.4.2}
\end{equation}
where, $f\in C^1(E)$ and $E$ is an open set in $\mathbb{R}^n$.\\
Suppose $X_*=0$ is an equilibrium of (\ref{4.4.2}).
\\
Let $\Phi_t(X_0)$ be the solution of (\ref{4.4.2}) with initial condition $X(0)=X_0$.\\
Any curve $g(x,\,y)=c$ \,\,which is not a straight line (i.e. curvature $>0$) cannot be invariant under $\Phi_t$.
\end{The}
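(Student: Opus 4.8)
The plan is to carry out, for an arbitrary curve of nonzero curvature, the same computation performed in the Note preceding the statement, and to argue that the \emph{history-dependent} contribution produced by the Caputo derivative of the nonlinear part of the curve can never be matched by the pointwise algebraic right-hand side $f$. First I would localize. Since $g(x,y)=c$ has curvature $>0$, there is a point at which it is not straight, and by the implicit function theorem (applied wherever $\partial g/\partial y\neq0$, otherwise interchanging the roles of $x$ and $y$) the curve can be written near that point as a graph $y=h(x)$ with $h\in C^2$ and $h''\neq0$ on a subinterval. Invariance of the curve under $\Phi_t$ means exactly that every solution $(x(t),y(t))$ of (\ref{4.4.2}) issued from a point of the curve satisfies $y(t)=h(x(t))$ throughout its interval of existence; writing $f=(f_1,f_2)$, this is the fractional analogue of the tangency condition.

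Next I would apply the operator ${}_0^C\mathrm{D}_t^\alpha$ to the identity $y(t)=h(x(t))$. The left-hand side becomes $f_2(x(t),h(x(t)))$, which is a \emph{pointwise} function of the current state. For the right-hand side I would expand $h$ about the base point, $h(x)=c_0+c_1x+\sum_{j\ge2}c_jx^j$ with $c_2=\tfrac12h''\neq0$ (for analytic curves; for merely $C^2$ data one truncates with a controlled remainder), and compute ${}_0^C\mathrm{D}_t^\alpha h(x(t))$ term by term using the generalized Leibniz rule quoted in the Note. The affine part contributes only $c_1\,{}_0^C\mathrm{D}_t^\alpha x=c_1 f_1$, which is again pointwise, whereas every power $x^j$ with $j\ge2$ produces, in addition to polynomial-in-$x$ terms, a genuinely nonlocal series of the form $\sum_{k\ge1}\binom{\alpha}{k}\bigl({}_0\mathrm{I}_t^{k-\alpha}(\cdot)\bigr)(\cdot)^{(k)}$ together with the singular boundary term proportional to $t^{-\alpha}/\Gamma(1-\alpha)$, precisely as in the parabola computation (\ref{4.49}).

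The heart of the argument, and the step I expect to be the main obstacle, is to show that these memory contributions cannot all cancel, i.e. that the total nonlocal part is not an algebraic function of $(x,y)$ along the curve. The cleanest route I would take is to isolate the lowest-order contribution: the coefficient $c_2\neq0$ forces a nonzero $x^2$-type term whose accompanying history functional and $t^{-\alpha}$ boundary term depend on the entire past of $x(\cdot)$ (in particular on the initial value $x(0)$), while $f_2(x,h(x))$ depends only on the instantaneous value $x(t)$. Evaluating the identity for two initial conditions that pass through a common state but possess distinct histories (fractional trajectories may even self-intersect, as noted in the Introduction), or equivalently extracting the coefficient of the singular $t^{-\alpha}$ term as $t\to0^+$ — a coefficient proportional to $c_2$ — shows that the relation can hold for all admissible trajectories only if $c_2=0$, and inductively only if every nonlinear Taylor coefficient vanishes. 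Hence $h''\equiv0$, contradicting curvature $>0$. The only surviving case, $h$ affine, returns us to the invariant linear subspaces already classified, so no curve with curvature $>0$ can be invariant under $\Phi_t$, which proves the theorem.
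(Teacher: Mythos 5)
Your route is genuinely different from the paper's, so let me first record the difference: the paper never attempts a Leibniz-rule computation for a general curve. Its proof of this theorem is the ``restart'' argument --- take a solution $X(t)$ lying on $S$, pick $Y_0=X(t_*)$ with $t_*>0$, observe that invariance would force the solution restarted at $Y_0$ to retrace the same path, and conclude that this contradicts Theorem \ref{Thm 4.4.1} (the failure of the semigroup property for $E_\alpha(At^\alpha)$), the linear system being a particular case of (\ref{4.4.2}). Your plan instead generalizes the Note's parabola computation (\ref{4.49}) to an arbitrary graph $y=h(x)$ and tries to kill the nonlinear Taylor coefficients of $h$ directly from the fractional tangency identity.

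The difficulty is that the step you yourself flag as the heart of the argument fails as proposed. You want to force $c_2=0$ by extracting ``the coefficient of the singular $t^{-\alpha}$ term as $t\to0^+$''. But along any solution of a Caputo system one has $x(t)-x(0)=\frac{f_1(X_0)}{\Gamma(\alpha+1)}t^\alpha+o(t^\alpha)$, so the boundary terms produced by the Leibniz rule, such as $\frac{t^{-\alpha}}{\Gamma(1-\alpha)}x(0)\bigl[x(t)-x(0)\bigr]$, are not singular at all: they remain bounded and converge to $\frac{x(0)f_1(X_0)}{\Gamma(1-\alpha)\Gamma(\alpha+1)}$. Indeed, letting $t\to0^+$ in the full identity ${}_0^C\mathrm{D}_t^\alpha y(t)={}_0^C\mathrm{D}_t^\alpha h(x(t))$ yields exactly $f_2(X_0)=h'(x(0))\,f_1(X_0)$, i.e.\ the \emph{classical} tangency condition, since $h(x(t))=h(x(0))+h'(x(0))\frac{f_1(X_0)}{\Gamma(\alpha+1)}t^\alpha+o(t^\alpha)$ implies ${}_0^C\mathrm{D}_t^\alpha[h(x(t))]\to h'(x(0))f_1(X_0)$. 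That condition is perfectly satisfiable with $h''\neq0$ (for instance by the invariant parabolas of Theorem \ref{Thm 4.3.3}), so the $t\to0^+$ limit produces no contradiction and does not force $c_2=0$: the obstruction to fractional invariance is invisible at leading order near $t=0$ and lives entirely in the accumulated memory at $t>0$. Your fallback remark --- compare two solutions through a common state with distinct histories --- is the correct idea and is essentially the paper's restart argument, but as stated it is only a remark: both restarted trajectories could a priori stay on the one-dimensional curve while moving along it differently, so one still needs a quantitative statement that the memory functionals genuinely differ, which is what the explicit Mittag-Leffler computation in Theorem \ref{Thm 4.4.1} supplies and what your outline never establishes. (A lesser issue: the term-by-term Leibniz series involves $x^{(k)}(t)$ for all $k$, while Caputo solutions generically behave like $t^\alpha$ at the origin, so the series manipulations need justification; the paper's Note shares this defect, but your proof leans on the series more heavily.)
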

\begin{proof}
If \,$S : \,g(x,\,y)=c$\, is an invariant set under $\Phi_t$ then, 
``any solution of (\ref{4.4.2}) starting on $S$ will stay on $S$ for all the time".\\
Let $X(t)$ be the solution of (\ref{4.4.2}) with $X(0)=X_0$ on the curve $g(x,\,y)=c$.\\
Let $Y_0=X(t_*)$, $t_*>0$ be any point on this solution curve i.e. on $g(x,\,y)=c$.
\par If the solution of (\ref{4.4.2}) starting at $Y_0$ follows the same path on $g(x,\,y)=c$, then it contradicts Theorem \ref{Thm 4.4.1}, because the linear system (\ref{4.46}) is a particular case of (\ref{4.4.2}).\\
$\therefore$  $g(x,\,y)=c$ cannot be invariant under $\Phi_t$, if $g$ is not a straight line.
\end{proof}

The generalization of Theorem \ref{Thm 4.4.2} is as below:
\begin{The}\label{Thm 4.4.3}
The fractional order system (\ref{4.4.2}) cannot have invariant manifolds other than linear subspaces of $\mathbb{R}^n$.
\end{The}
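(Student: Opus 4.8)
The plan is to argue by contradiction and to reduce the statement to the one-dimensional case already settled in Theorem \ref{Thm 4.4.2}, the driving fact being the failure of the semigroup property for the fractional flow. Suppose $M\subseteq\mathbb{R}^n$ is a smooth manifold that is invariant under $\Phi_t$ but is \emph{not} a linear subspace of $\mathbb{R}^n$. Being non-flat, $M$ has a point $p$ of nonzero curvature, so near $p$ it is not contained in any affine subspace of its own dimension. The first step is to localize the obstruction to a single curved arc $\gamma\subseteq M$ through $p$ (with curvature $>0$), thereby passing from the higher-dimensional picture to a one-dimensional one that the earlier results can handle.

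The engine of the argument is the computation already performed in the proof of Theorem \ref{Thm 4.4.1}: for $Y_0=\Phi_{t_*}(X_0)$ with $t_*>0$ one has
\begin{equation*}
\Phi_t(Y_0)=E_\alpha(At^\alpha)E_\alpha(At_*^\alpha)X_0,
\end{equation*}
which cannot be written as $E_\alpha(As^\alpha)X_0$ for any $s>0$. Hence a solution restarted at an interior point $Y_0$ of a trajectory does \emph{not} retrace that trajectory, and a one-dimensional solution curve of (\ref{4.4.2}) is itself invariant only when it is a straight line along an eigendirection. For a genuinely curved one-dimensional invariant set, a trapped trajectory has nowhere to escape and is forced to retrace the set, producing exactly the contradiction with Theorem \ref{Thm 4.4.1} that was exploited in Theorem \ref{Thm 4.4.2}. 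Consequently, if the curved arc $\gamma$ produced in the first step could be shown to be invariant, Theorem \ref{Thm 4.4.2} would immediately finish the proof, leaving $M$ a linear subspace as the only consistent possibility, in agreement with the invariant-subspace result of Section \ref{Sec4}.

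The hard part is precisely this reduction, and it is genuinely more delicate than in Theorem \ref{Thm 4.4.2}. When $\dim M\ge 2$, set-invariance of $M$ does not pin a trajectory to the prescribed arc $\gamma$: a solution starting on $\gamma$ may leave $\gamma$ while remaining in $M$, so I cannot simply assert that $\gamma$ is invariant. The careful step is to show that the failure of the semigroup law forces, through each point of $M$, a whole family of mutually non-retracing solution curves, all of which invariance compels to lie in $M$; a finite-dimensional curved manifold cannot accommodate this ``fanning,'' so compatibility with invariance forces $M$ to be flat. I expect the cleanest rigorous route is either a tangent-space/linearization argument showing that the relevant tangent space must be an invariant subspace of the linearized coefficient matrix and that $M$ must agree with it near $p$ (contradicting the nonzero curvature at $p$), or a direct dimension count on the family of solution curves that invariance forces into $M$.
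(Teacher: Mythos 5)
Your proposal is not a complete proof: the reduction you yourself flag as ``the hard part'' --- passing from a curved invariant manifold $M$ with $\dim M\ge 2$ to a curved arc $\gamma\subseteq M$ that is itself invariant --- is never carried out. You correctly observe that set-invariance of $M$ does not pin a solution starting on $\gamma$ to $\gamma$, so Theorem \ref{Thm 4.4.2} cannot be invoked directly; but the two rescue routes you then gesture at are left as expectations, not arguments. Neither is obviously workable. A manifold of dimension $\ge 2$ has room for uncountably many mutually non-retracing solution curves, so a na\"ive dimension count on the ``fanning'' family produces no contradiction by itself; and the tangent-space/linearization route would require a correspondence between invariant manifolds of the nonlinear system (\ref{4.4.2}) and invariant subspaces of a linearized system --- precisely the kind of stable-manifold-type statement whose validity for fractional systems is what is in dispute. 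There is also an inherited weakness one level down: your ``engine'' is the Mittag-Leffler computation from Theorem \ref{Thm 4.4.1}, which is specific to the linear system (\ref{4.46}); for genuinely nonlinear $f$ the restarted solution is not of the form $E_\alpha(At^\alpha)E_\alpha(At_*^\alpha)X_0$, so even the one-dimensional contradiction needs more than that computation supplies (the paper's own Theorem \ref{Thm 4.4.2} leans on the phrase ``the linear system is a particular case of (\ref{4.4.2})'', which rules out curved invariant sets only for systems that happen to be linear).

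For comparison: the paper offers no proof of Theorem \ref{Thm 4.4.3} at all. It is stated bare, as ``the generalization of Theorem \ref{Thm 4.4.2}'', supported only by the preceding remark that the generalized Leibniz rule injects the extra term $\sum_{k=1}^{\infty}\binom{\alpha}{k}\left({}_0\mathrm{I}_t^{k-\alpha}x(t)\right)x^{(k)}(t)$ into any tangency-type condition, so that no nonzero coefficient can satisfy it. So the gap you identified is real, and it is present in the paper as well; your write-up is more candid about where the difficulty lies, but it does not close it, and hence does not constitute a proof of the statement.
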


\section{Comments on the invariant manifolds in fractional order systems presented in the literature} \label{Sec5}
It is clear from the discussion in Section \ref{Sec4} that the literature \cite{Cong,Sayevand,Deshpande,Deshpande1,Cong1,Ma,Wang} developed for the local invariant manifolds in fractional order systems cannot provide correct results. In fact, it is not verified in any of these papers that whether the invariant manifolds $S$ obtained are satisfying following properties: \\
If $X_0$ is any initial condition, sufficiently close to equilibrium $X_*$, then the solution $\Phi_t(X_0)$ of given system starting at $X_0$
\begin{enumerate}
\item converge to $X_*$ as $t\rightarrow\infty$ if $S$ is stable manifold and as $t\rightarrow-\infty$ if $S$ is unstable manifold
\item stay on $S$ for all the time.
\end{enumerate}
e.g. The local stable manifold given in the paper \cite{Cong} does not follow this property as explained below:
\begin{Ex}
Consider 
\begin{equation}
\begin{split}
{}_0^C\mathrm{D}_t^\alpha x & = x-y^2,\\
{}_0^C\mathrm{D}_t^\alpha y & = -y. \label{4.7.1}
\end{split}
\end{equation}
\end{Ex}
For $\alpha=0.5$, the exact solution is given as \cite{Cong},
\begin{equation}
\begin{split}
x(t)& = c_1E_\frac{1}{2}(\sqrt{t})-c_2^2\int_0^t (t-s)^{-1/2}E_{\frac{1}{2},\frac{1}{2}}(\sqrt{t-s})(E_\frac{1}{2}(-\sqrt{s}))^2\,\mathrm{d}s\\
y(t) & = c_2E_\frac{1}{2}(-\sqrt{t}). \label{4.7.2}
\end{split}
\end{equation}
Also the local stable manifold \cite{Cong} $S$ is given by,
\begin{equation}
x=-y^2\int_0^\infty e^{-s}(E_\frac{1}{2}(-\sqrt{s}))^2\,\mathrm{d}s=-\left(-1+\frac{4}{\pi}\right)y^2 \approx -0.27324 \,\,y^2\label{4.7.3}
\end{equation}

\begin{equation}
\therefore S=\{(x,y) : x=-0.27324 \,\,y^2\}. \label{4.7.4}
\end{equation}

If $(x(t),y(t))$ is solution of (\ref{4.7.1}) with initial condition $(x(0),y(0))=(c_1,c_2)$  on $S$ then $c_1=-0.27324 c_2^2$. Further, using asymptotic expansion of Mittag-Leffler function \cite{Podlubny},
\begin{equation}
 x(t) \approx -2(0.27324)c_2^2e^t+\frac{0.27324}{\sqrt{\pi}}t^{-1/2}c_2^2-\frac{2c_2^2e^t}{\pi}\int_{0}^{t}\frac{e^{-s}}{s}\mathrm{d}s
\end{equation}
for sufficiently large value of $t$.
Note that, R.H.S. does not tends to $0$ as $t\rightarrow \infty$ for any $c_2\ne 0$.\\
$\therefore$ $x(t) \nrightarrow 0$ as $t\rightarrow \infty$.\\
$\Rightarrow$ $S$ cannot be a local stable manifold.\\
 In the following Figure \ref{Fig 11}, we can see that the solution (\ref{4.7.2}) starting on $S$, in any small neighborhood of origin e.g. at $(-0.27324\times 10^{-20},\,\, 10^{-10})$ does not tends towards origin as $t\rightarrow \infty$. 
\begin{figure}[h]
   \begin{center}
             \includegraphics[width=0.6\textwidth]{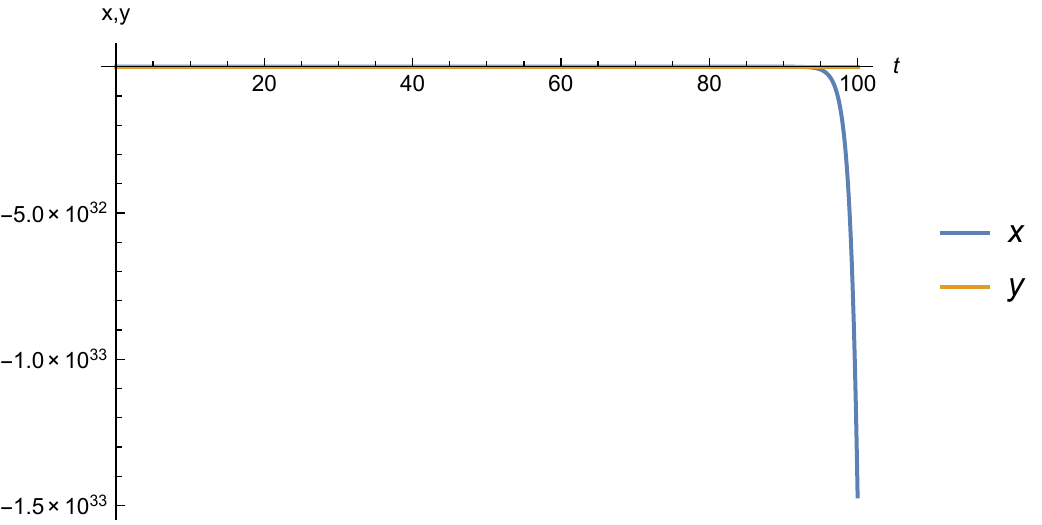}
            \caption{Solution curves (\ref{4.7.2}) with initial condition on $S$.}         
   \label{Fig 11}
   \end{center}  
          \end{figure}

\par Now, we prove that neither the parabola $x=my^2$ nor the parabola $y=mx^2$ are invariant under the flow of  system (\ref{4.7.1}).\\[0.15cm]
{\bf (I)}: \quad
Consider the parabola 
 $
x=my^2 
$. Differentiation of order $\alpha$ gives
\begin{equation*}
  {}_0^C\mathrm{D}_t^\alpha x=m \,\,{}_0^C\mathrm{D}_t^\alpha (y^2). \label{4.4.20}
\end{equation*}
By using generalized Leibniz rule for Caputo fractional derivative of order $0<\alpha<1$ and substituting $x=my^2$, we obtain
\begin{equation*}
\begin{split}
 (2m-1)y^2 -\frac{m y(0)t^{-\alpha}}{\Gamma(1-\alpha)}\big[y(t)-y(0)\big]-m \sum_{k=1}^{\infty}\binom{\alpha}{k}\big({}_0\mathrm{I}_t^{k-\alpha} y(t)\big)y^{(k)}(t)=0.
\end{split}
\end{equation*} 
This holds for all $t$ if and only if $2m-1=0$ and $m=0$, which is inconsistent.\\
$\Rightarrow$ There does not exist any $m\in\mathbb{R}$ such that $x=my^2$ is invariant under the flow of system (\ref{4.7.1}).\\

{\bf (II)}: \quad
Consider \begin{equation*}
y=mx^2. \label{4.4.21}
\end{equation*}
In this case, the tangency condition gives,
\begin{equation*}
\begin{split}
 m^2x^4-mx^2-mx -\frac{m x(0)t^{-\alpha}}{\Gamma(1-\alpha)}\big[x(t)-x(0)\big]-m \sum_{k=1}^{\infty}\binom{\alpha}{k}\big({}_0\mathrm{I}_t^{k-\alpha} x(t)\big)x^{(k)}(t)=0.
\end{split}
\end{equation*} 
Using the similar arguments, we can easily check that there does not exist any nonzero  $m\in\mathbb{R}$ such that $y=mx^2$ is invariant under the flow of system (\ref{4.7.1}).
  
\section{Conclusion}  \label{Sec6}
  We used tangency condition to propose the necessary and sufficient conditions for the existence of invariant straight lines and parabolas in the planar polynomial systems of ordinary differential equations. Further, we proved that the conditions for the invariance of linear subspaces in fractional order systems are same as their classical counterparts. Ample number of examples are provided to support the results.
  \par Important contribution of this work is the result showing the nonexistence of invariant manifolds (except linear subspaces) in fractional order systems. In particular, we have shown that any curve with curvature $>0$ cannot be invariant under the flow of fractional order system.

  \section*{Acknowledgment}
  S. Bhalekar acknowledges  the Science and Engineering Research Board (SERB), New Delhi, India for the Research Grant (Ref. MTR/2017/000068) under Mathematical Research Impact Centric Support (MATRICS) Scheme. M. Patil acknowledges Department of Science and Technology (DST), New Delhi, India for INSPIRE Fellowship (Code-IF170439).

\end{document}